\newcommand{\weak}{\rightharpoonup}
\newcommand{\be}{\begin{equation}}
\newcommand{\ee}{\end{equation}}
\newcommand{\ba}{\begin{eqnarray}}
\newcommand{\ea}{\end{eqnarray}}
\newcommand{\beq}{\begin{equation}}
\newcommand{\eeq}{\end{equation}}
\renewcommand{\leq}{\leqslant}
\renewcommand{\le}{\leqslant}
\renewcommand{\geq}{\geqslant}
\renewcommand{\ge}{\geqslant}
\providecommand{\norm}[1]{\left\lVert#1\right\rVert}
\renewcommand{\S}{\mathcal S}
\def \R {\mathbb{R}}
\def \P {\mathcal{P}}
\def \S {\mathcal{S}}
\def \B {\mathcal{B}}
\def \N {\mathbb{N}}
\def \Om {\Omega}
\def \dis {\displaystyle}
\def\beq{\begin{equation}}
\def\eeq{\end{equation}}
\def\ecart{\noalign{\medskip}}
\def\ba{\begin{array}}
\def\ea{\end{array}}
\newtheorem{theo}{Theorem}
\newtheorem{prop}{Proposition}
\newtheorem{defi}{Definition}
\newtheorem{rem}[defi]{Remark}
\DeclareMathOperator*{\argmin}{argmin}
\DeclareMathOperator*{\argmax}{argmax}
\begin{document}

\title{Learning Optimal Feedback Operators and their Polynomial Approximation}

\author{\name Karl Kunisch \email karl.kunisch@uni-graz.at \\
       \addr Radon Institute for Computational and Applied Mathematics\\
Austrian Academy of Sciences\\
and\\
Institute of Mathematics and Scientific Computing\\
University of Graz\\
Heinrichstraße 36, A-8010 Graz, Austria
       \AND
       \name Donato Vásquez-Varas \email donato.vasquez-varas@ricam.oeaw.ac.at\\ \addr Radon Institute for Computational and Applied Mathematics\\
Austrian Academy of Sciences\\
Altenbergerstraße 69, A-4040 Linz, Austria
\AND
\AND
       \name Daniel Walter \email daniel.walter@oeaw.ac.at\\ \addr Radon Institute for Computational and Applied Mathematics\\
Austrian Academy of Sciences\\
Altenbergerstraße 69, A-4040 Linz, Austria}

\editor{}

\maketitle

\begin{abstract}
A learning based  method for obtaining feedback laws for nonlinear optimal control problems is proposed. The learning problem is posed such that the open loop value function is its optimal solution.
 This infinite dimensional, function space, problem, is approximated by a polynomial ansatz and its convergence is analyzed. An $\ell_1$ penalty term is employed, which combined with the proximal point method, allows to find sparse solutions for the learning problem. The approach requires multiple evaluations  of the elements of the polynomial basis and of their derivatives.  In order to do this efficiently a graph-theoretic algorithm is devised.  Several examples underline that the proposed methodology provides a promising  approach for  mitigating the curse of dimensionality which would be involved in case the optimal feedback law was obtained by solving  the Hamilton Jacobi Bellman equation.
\end{abstract}

\begin{keywords}
 Optimal feedback control, nonlinear systems, learning theory,  Hamilton Jacobi Bellman equation, polynomial based approximation.
\end{keywords}

\section{Introduction}
Designing optimal feedback laws for non-linear control problems is a challenging  problem from both the theoretical and applied points of view. The main approach for obtaining  an optimal feedback law
 is based on dynamical programming. Its solution involves the control theoretic
  Hamilton-Jacobi-Bellman (HJB) equation.  For high dimensional problems the computational cost of directly  solving the HJB equation makes this approach  non-viable. In the last years many efforts have been put forward to overcome this difficulty and to partially alleviate the curse of dimensionality. Here we can only mention a very small sample of the large number of contributions: representation formulas \citep{Chow1,Chow2,Chow3,DarbonOsher}, approximating the HJB equation by neuronal networks \citep{Han,Darbon,Nusken,Onken,Ito,KunischWalter,Ruthotto}, data driven approaches \citep{Nakamura1,Nakamura2,AzKaKK,Kang,Albi},  max-plus methods \citep{Akian,Gaubert,Dower}, polynomial approximation \citep{Kalise1,Kalise2}, tensor decomposition
methods \citep{Horowitz,Stefansson,Gorodetsky,Dolgov,Oster,Oster2}, POD methods \citep{Alla2,KunischVolk}, tree structure algorithms \citep{Alla1}, and sparse grids  techniques\citep{BokanowskiGarckeGriebelPo, Garcke, KangWilcox}, see also the proceedings volume \citep{KaliseKuRa}. Among the classical methods for solving the HJB equation we mention finite difference schemes \citep{Bonnans}, semi-Lagrangian schemes \citep{Falcone}, and policy iteration \citep{Alla, Beard, Puterman, Santos}.

\par
 In the present  work we propose, analyze, and numerically test a learning approach to obtain optimal feedbacks laws. The problem is formulated in a way that all its solutions are optimal feedback laws.
 Due to the infinite dimensional nature of the problem, a  finite dimensional approximation is required. We propose polynomials as  ansatz functions and add an $\ell_1$ penalty term, in order to promote sparsity in the solutions. Appropriate hypotheses on the value function are provided which  ensure the existence of a solution to this problem. Furthermore,  convergence is established as the dimension of the ansatz space tends to infinity.  In order to efficiently evaluate elements of the chosen polynomial basis and their first and second order derivatives, a tree-based procedure is devised.

 This work is an extension of the developments  commenced in \citep{KunischWalter}. Differently from the present paper in \citep{KunischWalter} the learning problem is formulated for the gradient of the value function and the approximation is based on neural networks. Here we learn the value function itself and utilize its gradient in the feedback law. The choice of monomials  as ansatz functions  turned out to be computationally very promising.  Certainly it would also be of interest  to investigate other non-grid based, approximation schemes in the future.
\par
The structure of this work is as follows. In \Cref{StatementOfProblem} we introduce the learning problem and in \Cref{PolyLearningSection} we present  the finite dimensional approximation by polynomials. In \Cref{ExistenceConvergenceSection} the existence of solutions and convergence of the finite dimensional problems are established. The optimality conditions for the finite dimensional problem are studied in \Cref{OptimalityConditionsSection}, together with a basis reduction procedure. The learning algorithm is developed in \Cref{OptimizationAlgoSection}. An efficient polynomial basis evaluation method is developed in \Cref{EvaluationSection}. In \Cref{GeneralizationSection} we present a result concerning the generalization capability of our approach. Finally, in \Cref{NumericalExperimentsSection} we present four numerical experiments which show that our algorithm is able to solve non-linear and high (here the dimension is 40) dimensional control problems in a standard laptop environment.

\par We end the section by introducing some notation which is needed in the following. For $k,m,d$ all integers greater than or equal 1,  and a domain $A\subset\R^{m}$, the spaces $H^{k}(A;\R^{d})$ and $H_{loc}^{k}(A;\R^{d})$ denote the Sobolev spaces and local Sobolev spaces of order $k$ from $A$ to $\R^{d}$. Analogously, for $p\geq 1$ the spaces  $L^{p}(A;\R^{d})$ and $L_{loc}^{p}(A;\R^{d})$ are the spaces of $p$ integrable and locally  $p$  integrable functions from $A$ to $\R^{d}$. In addition, for a compact set $K\subset\R^{m}$ and $\alpha\in (0,1]$, we define  $C^{k,\alpha}(K;\R^{d})$ to be the class of  $k$ times differentiable functions with $\alpha-$Hölder continuous derivatives up to order $k$  from $K$ to $\R^{d}$. For a Lipschitz continuous function $v:K\to \R^{m}$ we define
$$ |v|_{Lip(K)}=\max_{x\neq y\in K}\frac{|v(x)-v(y)|}{|x-y|},  $$
where $|\cdot|$ is the usual euclidean norm. For $y\in\R^{m}$ we denote the $p-$norm with $p\in (1,\infty)\setminus\{2\}$ by $|y|_{p}$, and the maximum norm by $|y|_{\infty}$.
\section{Statement of the Problem}
\label{StatementOfProblem}
In this work we study the  infinite horizon optimal control problem:
\beq
\dis\min_{\quad u\in L^{2}((0,\infty);\R^{m})} J(u,y_0):= \int_{0}^{\infty}\ell(y(t))dt+\frac{\beta}{2} \int_{0}^{\infty}|u(t)|^{2}dt
\label{ControlProblem}
\eeq
where $y\in {H_{loc}^{1}((0,\infty);\R^{d})}$ is the unique solution of
\beq
 y'(t)=f(y(t))+Bu(t),\quad t\in (0,\infty),\quad y(0)=y_0.
 \label{Ode1}
\eeq
Here $f:\R^{d}\to \R^{d}$ is Lipschitz on bounded sets, $\ell:\R^{d}\to \R$ is of class $C^{1}$, bounded below by 0, $\beta>0$ is the penalization for the control, and $B\in \R^{d\times m}$ is the control matrix, with $d\geq m\in \N$. We also assume that $\ell(0)=0$ and $f(0)=0$. This implies that $0$ is an equilibrium for system \eqref{Ode1}. {In \eqref{ControlProblem} the cost $J$ is considered as extended real-valued function.} \par
Problem \eqref{ControlProblem} can be solved by means of  dynamical programming. Namely, defining the value function of \eqref{ControlProblem} by
\begin{equation}
V(y_0)=\min_{\quad u\in L^{2}((0,\infty);\R^{m})} J(u,y_0),
\end{equation}
and assuming that $V$ is differentiable in an open neighborhood $U\subset\R^{d}$ of $y_0$, then $V$ solves the Hamilton Jacobi Bellman equation
\beq
\min_{u\in \R^{m}}\left\{\nabla V(y)^{\top}\left(f(y)+Bu\right)+\frac{\beta}{2}|u|^2+\ell(y)\right\}=0
\label{HJB}
\eeq
in $U$. By the verification's Theorem, the optimal control in \eqref{ControlProblem} is given by the feedback law:
\begin{equation}
    u^{*}(t)=-\frac{1}{\beta}B^{\top}\nabla V(y^{*}(t)),
    \label{HJBFeedbackLaw}
\end{equation}
provided $y^{*}(t)\in U$. Here $y^{*}$ is the solution of \eqref{Ode1} corresponding to $u^{*}$. Replacing $u$ by $u^*$ in \eqref{Ode1} we get the closed loop system \begin{equation}
    y'(t)=f(y(t))-\frac{1}{\beta}BB^{\top}\nabla V(y(t)),\quad t\in (0,\infty),\quad y(0)=y_0.
    \label{ClosedLoopProblem}
\end{equation}
This approach involves solving the Hamilton Jacobi Bellman equation, which is computationally expensive or even unfeasible  for problems of  high dimension. Therefore, in this work we propose to find a feedback law by solving a learning problem. For this purpose we define a computational domain
$\Omega=(-l,l)^{d}$ with $l>0$, and choose a set of initial conditions $\{y_{0}^{i}\}_{i=0}^{I}\subset\Omega$.
With these quantities we formulate the problem
\begin{equation}
    \min_{v\in C^{1,1}(\overline{\Om}),\ \nabla v(0)=0,\ v(0)=0}\quad
    \frac{1}{I}\sum_{i=1}^{I}\int_{0}^{\infty}\ell(y_{i}(t))dt+\frac{1}{2\beta}\int_{0}^{\infty}|B^{\top}\nabla v (y_{i}(t))|^{2},
    \label{LearningProblem}
\end{equation}
where  $y_{i} \in H_{loc}^{1}((0,\infty),\R^{d})$ for $i=1,\ldots,I$ are the solutions of the following closed loop problems
\begin{equation}
\begin{array}{l}
  \dis y_{i}'(t)=f(y_{i}(t))-\frac{1}{\beta}BB^{\top}\nabla v(y_{i}(t)),\ y_i(0)=y_0^{i},\  y_{i}(t)\in \overline{\Omega},\ t \in (0,\infty).
    \end{array}
    \label{ClosedLoopProblem2}
\end{equation}
\par
Conditions that we shall impose on $\Omega$ and $V$  below will guarantee that $V$ is a solution of \eqref{LearningProblem}. In order to solve this problem numerically we replace the infinite dimensional function space  $C^{1,1}(\overline{\Omega})$ by a finite dimensional space. In this case we add a penalty term in order to ensure the existence of at least one solution. Moreover for numerical purposes we introduce a finite horizon formulation. This problem will be formulated in the following section, where we also state  results regarding the existence of solutions and their convergence to a solution of \eqref{ClosedLoopProblem2}.
\section{Polynomial Learning Problem}
\label{PolyLearningSection}
\par In this section we formulate the finite dimensional learning problem. For this purpose we first introduce some notation. Let  $n\in \N$ and $d\in \N$, where $\N=\{0,1,2,\ldots\}$. We denote the space of polynomials with total degree less than or equal to $n$ in $\R^{d}$ by $\P_{n}$ and its dimension by $m_{n}$. For a multi-index $\alpha=(\alpha_1,\ldots,\alpha_d)\in \N^{d}$ we define a monomial $\phi_{\alpha}$ by
\beq \phi_\alpha(y)=\prod_{j=1}^{d}y_{j}^{\alpha_j},\quad y \in \R^{d}.\label{Monomial}\eeq
We denote by $\Lambda_{n}$ the set of multi-indexes such the sum of all its elements is lower or equal than $n$, that is
\beq\Lambda_n=\left\{ \alpha\in \N^{d}:\ \sum_{j=1}^{d}\alpha_j\leq n\right\} \label{AlphaIndexes}\eeq
We assume that the set of all the multi-index $\N^{d}$ is ordered such that
\beq \N^{d}=\Big\{\alpha^{i}\Big\}_{i=1}^{\infty}\mbox{ and }  \Lambda_{n+1}=\Lambda_n\bigcup{\Big\{\alpha^i\Big\}_{i=m_{n}+1}^{m_{(n+1)}}}, \label{PolyBasisInclusion}\eeq
for example, for $d=2$, we have $\Lambda_1=\{(0,0);(0,1);(1,0)\}. $
We denote the set of all the monomials with total degree lower or equal to $n$ by $\B_{n}$. Therefore, by \eqref{AlphaIndexes} and \eqref{PolyBasisInclusion} we have
\beq \B_{n}=\{\phi_{\alpha}:\alpha\in \Lambda_n\} \mbox{ and } \B_{n+1}=\B_n\bigcup{\Big\{\phi_{\alpha^i}\Big\}_{i=m_{n}+1}^{m_{(n+1)}}},\eeq

We denote the {\em hyperbolic cross} multi-index set by $\Gamma_{n}$, i.e.
\beq
\Gamma_{n}=\Big\{\alpha=(\alpha_1,\ldots,\alpha_d)\in \N^{d}:\quad \prod_{j=1}^{d}(\alpha_j+1)\leq n+1\Big\}.
\label{HyperCrossMultiindex}
\eeq
\par
We also introduce the subset $\mathcal{S}_{n}$ of $\B_n$ composed by the elements of $\B_n$ associated to the multi-indexes in $\Gamma_n$, i.e.
\begin{equation}
    \mathcal{S}_{n}=\{\phi_{\alpha}:\ \alpha\in\Gamma_{n}  \}
\end{equation}
and the subspace $\mathcal{A}_{n}$ of $\P_n$ generated by $\mathcal{S}_{n}$. \par

It is important to observe that the cardinality of $\Lambda_n$ is $\sum_{j=1}^{n}{d+j+1 \choose j}$, on the other hand the cardinality of $\Gamma_n$ is bounded by $\min\{2n^{3}4^{d},e^{2}n^{2+\log_2(n)}\}$ \citep{Adcock2017}. Hence,  for high $d$ the cardinality of the hyperbolic cross is smaller than cardinality of $\Lambda_n$. For this reason, $\Lambda_n$ is more suitable for high dimensional problems. \par

Let us consider a set of initial conditions  $\{y_{0}^{1},\ldots,y_{0}^{I}\}\subset\R^{d}$, which will be called the training set. We assume throughout that the value function $V$ is of class $C^{1,1}(\overline{\Omega})$ and  that the image of the solutions of the closed loop problems \eqref{ClosedLoopProblem} are strictly contained in $\Omega$, that is, there exists $\delta\in (0,l)$ such that
\beq |y_{i}(t)|_{\infty}\leq l-\delta \mbox{ for all }t\in (0,\infty), \label{LInftyBound}\eeq
where $y_i$ for $i\in\{1,\ldots,I\}$ are the solutions to the closed loop problem \eqref{ClosedLoopProblem} for $y_0=y_0^{i}$. Under these conditions  $V$ is  a solution of \eqref{LearningProblem}.

 For $T\in(0,\infty)$ we define $\mathcal{J}_{T}:C^{1}(\overline{\Omega})\to [0,\infty]$ by
\beq \label{eq:Jdef}
v\mapsto \mathcal{J}_{T}(v)=\frac{1}{I}\sum_{i=1}^{I}\int_{0}^{T}\left(\ell(y_{i}(t))+\frac{1}{2\beta}|B^{\top}\nabla v(y_{i}(t))|^{2}\right)dt,
\eeq
where $y_{i}$ are the solutions of the closed loop problems
\begin{equation}
\left\{\begin{array}{l}
  \dis y_{i}'(t)=f(y_{i}(t))-\frac{1}{\beta}BB^{\top}\nabla v(y_{i}(t)),\ y_i(0)=y_0^{i},\\ \ecart \dis
  y_{i}(t)\in \overline{\Omega},\ t \in (0,T).
    \end{array}\right.
    \label{ClosedLoopProblem3}
\end{equation}
If there exists an index $i\in \{1,\ldots,I\}$ such that problem \eqref{ClosedLoopProblem3} has no solution for a given $v\in C^{1,1}(\overline{\Omega})$, then we set $\mathcal{J}_{T}(v)=\infty$. Further, we define $\mathcal{J}_{\infty}:C^{1}(\overline{\Omega})\to [0,\infty]$ as the pointwise limit of $\mathcal{J}_{T}$ when $T$ goes to infinity, i.e.
\beq
v\mapsto \mathcal{J}_{\infty}(v):=\lim_{T\to\infty}J_{T}(v).
\eeq

To introduce a family of approximating computationally tractable  problems, we
consider a finite set of monomials $X=\{\phi_{i}\}_{i=1}^{M}$, which can be $\B_n\setminus\B_1$ or $S_n\setminus\B_1$. Here we subtract $\B_1$ to ensure $v(0)=0$ and $\nabla v(0)=0$. Then, for $\theta\in \R^{M}$, setting
$$v=\sum_{i=1}^{M}\theta_i \phi_i,$$ we define
\beq  \tilde{\mathcal{J}}_{T}(\theta)=\mathcal{J}_{T}\left(v\right).
\eeq
Further we define   a penalty function $P_{\gamma,r}$, with $\gamma>0$ and $r\in [0,1]$  by
\begin{equation}
    \theta\mapsto P_{\gamma,r}(\theta)=\gamma\left(\frac{(1-r)}{2}|\theta|_2^2 + r|\theta|_1\right).\label{PenaltyTerm}
\end{equation}

Now we are in a  position to introduce the finite dimensional version of the learning problem. That is, we replace $C^{1,1}(\overline{\Omega})$ with the space of polynomial spanned by $X$, add the penalty $P_{\gamma,r}$ to the objective function and consider a finite time horizon $T\in (0,\infty)$, namely
\beq \min_{\theta\in \R^{M}}  \tilde{\mathcal{J}}_{T}(\theta)+P_{\gamma,r}(\theta)
    \label{PolyLearningProblem}
\eeq
The penalty term $P_{\gamma,r}$  ensures the coercivity of the objective function. Moreover the non-smooth term in \eqref{PenaltyTerm} promotes the sparsity of the solution of \eqref{PolyLearningProblem}. However, unless we assume some further hypotheses on the structure of $\ell$, $f$, $B$ and/or the value function $V$, we do not yet know if there exist a solution of \eqref{PolyLearningProblem}.

\section{Existence and Convergence}
\label{ExistenceConvergenceSection}
In this section we are concerned with the existence of solutions for \eqref{PolyLearningProblem}.
For $n \ge 2$, $T\in (0,\infty]$, $X=\B_n\setminus \B_1$ or $X=\S_n\setminus \B_1$, and $M$ the cardinality of $X$, we say that $\theta\in \R^{M}$ is a feasible solution for problem \eqref{PolyLearningProblem} if $\tilde{\mathcal{J}}_{T}(\theta)<\infty.$ If there exits a feasible solution for problem \eqref{PolyLearningProblem}, we say that the problem is feasible.
\begin{theo}
Consider $\gamma>0$, $r\in [0,1]$, $T\in(0,\infty]$ and $X=\{\phi_i\}_{i=1}^{M}\subset C^{1,1}(\overline{\Omega})$. If problem \eqref{PolyLearningProblem} is feasible,
then it has at least one solution.
\label{ExistenceTheo}
\end{theo}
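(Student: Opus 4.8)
The plan is to apply the direct method of the calculus of variations. Since $\ell\geq 0$ and $P_{\gamma,r}\geq 0$, the objective $F(\theta):=\tilde{\mathcal{J}}_{T}(\theta)+P_{\gamma,r}(\theta)$ is bounded below by $0$, and feasibility guarantees $m:=\inf_{\theta}F(\theta)<\infty$. First I would establish coercivity of $F$, which comes entirely from the penalty: if $r\in[0,1)$ the quadratic part $\frac{\gamma(1-r)}{2}|\theta|_2^2$ forces $P_{\gamma,r}(\theta)\to\infty$ as $|\theta|_2\to\infty$, while if $r=1$ the term $\gamma|\theta|_1\geq\gamma|\theta|_2$ does the same; hence in all cases $F(\theta)\to\infty$ as $|\theta|_2\to\infty$. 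Consequently any minimizing sequence $(\theta^{k})_k$, which may be taken to satisfy $F(\theta^{k})\leq m+1$, is bounded in $\R^{M}$, and by Bolzano--Weierstrass admits a subsequence, still denoted $(\theta^{k})$, with $\theta^{k}\to\theta^{*}\in\R^{M}$.

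The crux is to show that $\theta^{*}$ is feasible and that $F$ is sequentially lower semicontinuous along this sequence. Writing $v^{k}=\sum_{i}\theta^{k}_i\phi_i$ and $g^{k}(y)=f(y)-\frac{1}{\beta}BB^{\top}\nabla v^{k}(y)$, I would exploit that the $\phi_i$ lie in $C^{1,1}(\overline{\Omega})$: on the compact set $\overline{\Omega}$ the $\nabla\phi_i$ are bounded and Lipschitz, so boundedness of $(\theta^{k})$ yields a uniform bound on $\|\nabla v^{k}\|_{C^{0}(\overline{\Omega})}$ and a uniform Lipschitz constant for $\nabla v^{k}$; together with $f$ being Lipschitz on the bounded set $\overline{\Omega}$ this makes the fields $g^{k}$ uniformly bounded and uniformly Lipschitz on $\overline{\Omega}$, while the linear dependence $\theta\mapsto\nabla v$ gives $g^{k}\to g^{*}:=f-\frac{1}{\beta}BB^{\top}\nabla v^{*}$ uniformly on $\overline{\Omega}$. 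Fix a finite horizon $S\leq T$. Discarding the finitely many infeasible $\theta^{k}$ (possible since $F(\theta^{k})\leq m+1<\infty$), each $y_i^{k}$ satisfies $y_i^{k}(t)\in\overline{\Omega}$ on $[0,S]$ and, because $g^{k}$ is uniformly bounded there, the $y_i^{k}$ are uniformly Lipschitz in $t$, hence equicontinuous. Arzelà--Ascoli then furnishes a further subsequence along which $y_i^{k}\to z_i$ uniformly on $[0,S]$, with $z_i(t)\in\overline{\Omega}$ by closedness of $\overline{\Omega}$. Passing to the limit in the integral form $y_i^{k}(t)=y_0^{i}+\int_{0}^{t}g^{k}(y_i^{k}(s))\,ds$ -- legitimate since $g^{k}\to g^{*}$ uniformly and $g^{*}$ is uniformly continuous on $\overline{\Omega}$ -- shows that $z_i$ solves the closed-loop problem \eqref{ClosedLoopProblem3} for $v^{*}$ and stays in $\overline{\Omega}$. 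Uniqueness of solutions for the locally Lipschitz field $g^{*}$ identifies $z_i$ with the trajectory $y_i^{*}$ defining $\tilde{\mathcal{J}}_{S}(\theta^{*})$, so in particular $\theta^{*}$ is feasible. For $T=\infty$ I would run this on $S=1,2,\dots$ and extract a diagonal subsequence, obtaining $y_i^{*}$ defined on all of $[0,\infty)$ with values in $\overline{\Omega}$ and $y_i^{k}\to y_i^{*}$ uniformly on every compact interval.

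It remains to pass to the limit in the cost. For finite $T$, uniform convergence $y_i^{k}\to y_i^{*}$ on $[0,T]$ together with $\nabla v^{k}\to\nabla v^{*}$ uniformly on $\overline{\Omega}$ makes the integrands $\ell(y_i^{k}(\cdot))+\frac{1}{2\beta}|B^{\top}\nabla v^{k}(y_i^{k}(\cdot))|^{2}$ converge uniformly on $[0,T]$, so $\tilde{\mathcal{J}}_{T}(\theta^{k})\to\tilde{\mathcal{J}}_{T}(\theta^{*})$ and, with $P_{\gamma,r}$ continuous, $F(\theta^{k})\to F(\theta^{*})$; since $F(\theta^{k})\to m$ this gives $F(\theta^{*})=m$. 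For $T=\infty$ I would only need lower semicontinuity: for each fixed finite $S$ the same argument gives $\tilde{\mathcal{J}}_{S}(\theta^{*})=\lim_k\tilde{\mathcal{J}}_{S}(\theta^{k})\leq\liminf_k\tilde{\mathcal{J}}_{\infty}(\theta^{k})$, using that the nonnegative integrand makes $\tilde{\mathcal{J}}_{S}\leq\tilde{\mathcal{J}}_{\infty}$; letting $S\to\infty$ and invoking $\tilde{\mathcal{J}}_{\infty}=\sup_{S}\tilde{\mathcal{J}}_{S}$ yields $\tilde{\mathcal{J}}_{\infty}(\theta^{*})\leq\liminf_k\tilde{\mathcal{J}}_{\infty}(\theta^{k})$, whence $F(\theta^{*})\leq\liminf_k F(\theta^{k})=m\leq F(\theta^{*})$. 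Either way $\theta^{*}$ attains the infimum.

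I expect the main obstacle to be the joint treatment of the state constraint $y_i(t)\in\overline{\Omega}$ and the possibility that $\tilde{\mathcal{J}}_{T}\equiv\infty$ at infeasible parameters: one must rule out that the limiting trajectory leaves $\overline{\Omega}$, which is handled above by closedness of $\overline{\Omega}$ together with uniqueness, so that the unique ODE solution is precisely the one remaining inside; and, in the infinite-horizon case, one must control the integral over the unbounded interval, where only lower semicontinuity -- not continuity -- survives and must be extracted through the finite-horizon truncations. Obtaining the continuous dependence of the trajectories on $\theta$ through Arzelà--Ascoli, rather than a direct Gronwall estimate, is what keeps the argument clean despite the constraint.
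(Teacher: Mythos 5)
Your proof is correct and follows the same overall strategy as the paper: the direct method, with coercivity supplied by the penalty $P_{\gamma,r}$, a convergent (sub)sequence of the infimizing sequence, and then continuity of the reduced cost for finite $T$ versus only lower semicontinuity, via finite-horizon truncations, for $T=\infty$. The differences are in the machinery and in one substantive detail. The paper isolates the continuity/lsc step as a separate lemma (\Cref{ContinuityLemma}) and proves it by bounding the trajectories in $H^{1}((0,T);\R^{d})$, extracting a weak limit, and using the compact embedding into $C([0,T];\R^{d})$; you instead obtain uniform Lipschitz-in-time bounds from the uniformly bounded closed-loop fields $g^{k}$ and apply Arzel\`a--Ascoli, passing to the limit in the integral form of \eqref{ClosedLoopProblem3} --- an equivalent but slightly more elementary compactness route. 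More notably, the paper's lemma \emph{assumes} $\mathcal{J}_{T}(v)<\infty$ for the limit function, and the proof of \Cref{ExistenceTheo} applies it to $v^{*}$ without verifying this hypothesis; your argument closes that loop explicitly, showing that the limiting trajectories exist, remain in $\overline{\Omega}$ (by closedness plus uniqueness for the Lipschitz limit field), and hence that $\theta^{*}$ is itself feasible. In this respect your write-up is more self-contained than the paper's, whose feasibility of the limit is only implicit in the lemma's proof (where the limit trajectory is in fact constructed). Your handling of the two penalty regimes ($r<1$ via the $\ell^{2}$ term, $r=1$ via $|\theta|_{1}\geq|\theta|_{2}$) and the diagonal extraction for $T=\infty$ are both sound.
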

\par

The proof of this theorem as well as of the remaining results of this section are given in Section \ref{Apendice}.

In general, we do not know if there exists any feasible solution for the learning problem. Nevertheless, given that $V$ is in $C^{1,1}(\overline{\Omega})$ and the density of the polynomials in $C^{1,1}(\overline{\Omega})$, we prove that for every finite time horizon $T>0$ there exists a degree high enough, such that \eqref{PolyLearningProblem} has at least one feasible solution. Moreover, assuming the exponential stability of the closed loop problem \eqref{ClosedLoopProblem} and that $V\in C^{2}(\overline{\Omega})$, we obtain that there exists a feasible solution.

\begin{prop}
\label{FeasibleSolProp}
For every $T>0$, there exits a positive integer $n$ and $\tilde{V}\in \P_{n}$ such that $\mathcal{J}_{T}(\tilde V)<\infty.$ Moreover, assuming that the value function $V$ is $C^{2}(\overline{\Omega})$, that
\beq \lim_{t\to \infty}y_i(t)=0, \mbox{ for all }i\in\{1,\ldots,I\},\eeq
where $\{y_i\}_{i=1}^{I}$ are the solutions of the closed loop problems \eqref{ClosedLoopProblem}, and  that the linearized system
\beq  \label{eq4.2}
z'=\left(Df(0)-\frac{1}{\beta}BB^{\top}\nabla \tilde{V}(0)\right)z,\quad z(0)=z_0
 \eeq
is exponentially stable,
we have that $\mathcal{J}_{\infty}(\tilde{V})<\infty.$
\end{prop}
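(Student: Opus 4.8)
The plan is to obtain $\tilde V$ as a polynomial approximation of the value function $V$ and to exploit that, on a bounded time interval, the closed loop trajectories depend continuously on the feedback. Since $V\in C^{1,1}(\overline{\Omega})$ and $\overline{\Omega}$ is a cube, polynomials are dense in $C^{1}(\overline{\Omega})$ for the $C^{1}$ norm, so for any $\varepsilon>0$ there is $p\in\P_n$ (with $n$ large) such that $\|p-V\|_{C^1(\overline{\Omega})}<\varepsilon$. Replacing $p$ by $\tilde V(y)=p(y)-p(0)-\nabla p(0)^{\top}y$, which is still in $\P_n$, and using $V(0)=0$, $\nabla V(0)=0$, we may in addition assume $\tilde V(0)=0$, $\nabla\tilde V(0)=0$ while keeping $\|\tilde V-V\|_{C^1(\overline{\Omega})}=O(\varepsilon)$. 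The closed loop vector field associated with $\tilde V$ differs from the one associated with $V$ by $\tfrac1\beta BB^{\top}(\nabla V-\nabla\tilde V)$, whose sup-norm on $\overline{\Omega}$ is $O(\varepsilon)$; since the trajectory $y_i$ for $V$ satisfies $|y_i(t)|_\infty\le l-\delta$ by \eqref{LInftyBound}, Gr\"onwall's inequality shows that, for $\varepsilon$ small enough (depending on $T$), the trajectory generated by $\tilde V$ stays within $|\cdot|_\infty\le l-\delta/2$ on $[0,T]$, hence exists and remains in $\overline{\Omega}$. The integrand in \eqref{eq:Jdef} is then continuous in $t$ on the compact interval $[0,T]$, so $\mathcal{J}_T(\tilde V)<\infty$.

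\textbf{Part 2 (infinite horizon), local decay.} Under the extra hypotheses I would first convert the spectral assumption into a nonlinear statement. Because $f(0)=0$ and $\nabla\tilde V(0)=0$, the origin is an equilibrium of \eqref{ClosedLoopProblem3}; its linearization is exactly \eqref{eq4.2} with system matrix $Df(0)-\tfrac1\beta BB^{\top}\nabla^2\tilde V(0)$. Assuming this matrix is Hurwitz (the exponential-stability hypothesis) and that the closed loop field is $C^1$ near $0$ (here $V\in C^2$ lets us take $\tilde V$ with $\nabla^2\tilde V$ close to $\nabla^2 V$), the principle of linearized stability yields $\rho>0$, $C_0\ge1$ and $\lambda>0$ such that any trajectory entering $\{|y|<\rho\}\subset\Omega$ at a time $T_0$ stays there and obeys $|y(t)|\le C_0 e^{-\lambda(t-T_0)}|y(T_0)|$ for $t\ge T_0$.

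\textbf{Part 2, entering the neighborhood and tail integrability.} The remaining task is to steer the $\tilde V$-trajectory into $\{|y|<\rho\}$ in finite time. Using $\lim_{t\to\infty}y_i(t)=0$ for the $V$-trajectories, pick $T_0$ with $|y_i(T_0)|<\rho/2$ for all $i$; then, by the same Gr\"onwall estimate as in Part 1 applied on $[0,T_0]$, choosing $\tilde V$ close enough to $V$ forces $|y_i^{\tilde V}(T_0)|<\rho$, so the $\tilde V$-trajectory enters the stability neighborhood at $T_0$. I would then split $\int_0^\infty=\int_0^{T_0}+\int_{T_0}^\infty$: the first integral is finite by Part 1, while on $[T_0,\infty)$ the exponential decay combined with the Lipschitz bound $\ell(y)\le |\ell|_{Lip(\overline{\Omega})}|y|$ (since $\ell(0)=0$) and with $|B^{\top}\nabla\tilde V(y)|^2\le C|y|^2$ (since $\nabla\tilde V(0)=0$) makes the integrand bounded by $C' e^{-\lambda(t-T_0)}$, hence integrable. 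Passing to the monotone limit in $T$ gives $\mathcal{J}_\infty(\tilde V)<\infty$.

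\textbf{Main obstacle.} The delicate point is the coupling between three quantities: the time $T_0$ needed to bring the $V$-trajectory $\rho/2$-close to $0$, the radius $\rho$ (together with $C_0,\lambda$) of the stability neighborhood, and the approximation accuracy of $\tilde V$, since the Gr\"onwall constant on $[0,T_0]$ grows exponentially in $T_0$. Reconciling these requires that the exponential-stability data be robust under small $C^2$-perturbations of the feedback: the Hurwitz property and the constants $(\rho,C_0,\lambda)$ survive if $\tilde V$ is taken sufficiently close to $V$ in $C^2(\overline{\Omega})$, which is precisely where the hypothesis $V\in C^2$ enters (to compare Hessians at the origin). Once $\rho,C_0,\lambda$ are fixed in this robust way, $T_0$ is chosen, and only then is the required closeness of $\tilde V$ determined, breaking the apparent circularity.
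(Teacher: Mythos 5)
Your proposal is correct and follows essentially the same route as the paper: finite-horizon feasibility via polynomial approximation of $V$ (with the affine normalization so that $\tilde V(0)=0$, $\nabla\tilde V(0)=0$) combined with a Gr\"onwall estimate guaranteeing existence of the perturbed trajectories in $\overline{\Omega}$ up to a time $T_{\varepsilon}\to\infty$ as $\varepsilon\to 0$ (this is exactly \Cref{TimeExistenceLemma}), and infinite-horizon feasibility by steering the perturbed trajectories into a perturbation-robust exponential-stability neighborhood of the origin in finite time and integrating the exponentially decaying tail of $\ell(y)+\frac{1}{2\beta}|B^{\top}\nabla\tilde V(y)|^{2}$ (this is \Cref{ExistenceProp}), with the same quantifier ordering you identify to break the circularity (fix the stability data first, then $T_{0}$, then the closeness of $\tilde V$). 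The only cosmetic difference is that where you invoke the principle of linearized stability with robust constants, the paper makes the robustness explicit by constructing a quadratic Lyapunov function $\psi(y)=y^{\top}My$ from the Lyapunov equation and showing the decrease estimate survives perturbations via $|\nabla\tilde V(y)-\nabla V(y)|\leq\varepsilon|y|$, which requires $C^{2}$-closeness on a neighborhood together with $\nabla(\tilde V-V)(0)=0$, not merely comparison of Hessians at the origin as your last paragraph suggests.
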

\par
Above we call system \eqref{eq4.2} exponentially stable if  there exist $C>0$ and $\mu>0$ such that
$|z|\leq Ce^{-\mu t}|z_0|\mbox{ for all } t \in (0,\infty)\mbox{ and }z_0\in \R^{d}$. \Cref{FeasibleSolProp} is a direct consequence of Theorem 9 in \cite[Section 7.2]{Hayek}, \Cref{ExistenceProp} and \Cref{TimeExistenceLemma}, which can be found in \Cref{Apendice}. We now address the convergence of problem \eqref{PolyLearningProblem} to \eqref{LearningProblem}.

\begin{theo}
There exist sequences $T_{k}\in (0,\infty)$, $\gamma_{k}\in (0,\infty)$, $r_{k}\in [0,1]$, $X_k=\B_{n_{k}}\setminus \B_1$ respectively $X_k=\S_{n_k}\setminus \B_1$ with $n_{k}\in \N$ and $M_k$ the cardinality of $X_k$, and $\theta^{k}\in \R^{M_k}$ solution of \eqref{PolyLearningProblem},  such that $n_{k}\to\infty$, $T_k\to \infty$, $\gamma_{k}\to 0$ and $\tilde{\mathcal{J}}_{T_{k}}(\theta_k)$  converges to the value of \eqref{LearningProblem} when $k\to\infty$. Moreover, setting $v_k=\sum_{j=1}^{M_{k}}\theta_{j}^{k}\phi_j$, where $X_k=\{\phi_i\}_{i=1}^{M_k}$, and defining \beq u_{i}^{k}=-\frac{1}{\beta}B^{T}\nabla v_{k}(y_{i}^{k}),\eeq where $y_{i}^{k}$ is the solution of the closed loop problem \eqref{ClosedLoopProblem3} for $v=v_{k}$, $T=T_k$ and $y_0=y_{0}^{i}$, we have that
\begin{equation}
    y_{i}^{k}\weak y^{*}_{i} \mbox{ in }H^{1}_{loc}((0;\infty);\R^{d})\mbox{ and } u_{i}^{k}\weak u^{*}_i \mbox{ in }L_{loc}^{2}((0;\infty);\R^{m}),\mbox{ when }k\to\infty
\end{equation}
where $u_{i}^{*}$ is a solution of the open loop problem \eqref{ControlProblem} and $y_{i}^{*}$ the solution of \eqref{Ode1}.
\label{ConvergenceTheo2}
\end{theo}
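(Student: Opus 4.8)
The plan is to argue by a diagonal construction paired with a weak compactness and lower semicontinuity argument, in the spirit of $\Gamma$-convergence. Throughout I use that, under the standing assumptions (in particular the strict containment \eqref{LInftyBound}), $V$ is a solution of \eqref{LearningProblem}, so the value of \eqref{LearningProblem} equals $\mathcal{J}_{\infty}(V)=\frac{1}{I}\sum_{i=1}^{I}V(y_0^i)$, which is finite since $V(y_0^i)=J(u_i^*,y_0^i)<\infty$.

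First, for the upper bound (a recovery step), I would fix any increasing sequence $T_k\to\infty$ and construct, for each $k$, a polynomial $p_k$ in the span of $X_k=\B_{n_k}\setminus\B_1$ (respectively $\S_{n_k}\setminus\B_1$) approximating $V$. Since $\bigcup_n\Gamma_n=\N^{d}$, the spans of both families are dense in $C^1(\overline{\Omega})$, so there are polynomials $q$ with $\|q-V\|_{C^1(\overline{\Omega})}$ arbitrarily small; subtracting the $\B_1$-part $q(0)+\nabla q(0)\cdot y$, which tends to $0$ because $V(0)=0$ and $\nabla V(0)=0$, yields an admissible $p_k$ with $p_k(0)=0$ and $\nabla p_k(0)=0$. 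For fixed $T_k$ the closed-loop field $f(\cdot)-\frac{1}{\beta}BB^{\top}\nabla v(\cdot)$ depends Lipschitz-continuously on $v$ in the $C^1$-norm, so by Gronwall the trajectories of \eqref{ClosedLoopProblem3} for $p_k$ converge uniformly on $[0,T_k]$ to those for $V$; since the trajectories for $V$ stay in $\{|y|_{\infty}\le l-\delta\}$, those for $p_k$ remain in $\overline{\Omega}$ once $\|p_k-V\|_{C^1}$ is small enough, whence $\mathcal{J}_{T_k}(p_k)<\infty$ and $|\mathcal{J}_{T_k}(p_k)-\mathcal{J}_{T_k}(V)|<1/k$. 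I then pick $\gamma_k$ small (and any $r_k$) so that $P_{\gamma_k,r_k}(\vartheta_k)<1/k$ for the coefficient vector $\vartheta_k$ of $p_k$, which is possible since $\vartheta_k$ is a fixed finite vector for each $k$. As $\mathcal{J}_{T_k}(p_k)<\infty$, problem \eqref{PolyLearningProblem} is feasible, so a minimizer $\theta_k$ exists by \Cref{ExistenceTheo}; comparing its value with that of $\vartheta_k$ and using $\mathcal{J}_{T_k}(V)\le\mathcal{J}_{\infty}(V)$ gives $\tilde{\mathcal{J}}_{T_k}(\theta_k)\le\mathcal{J}_{\infty}(V)+2/k$, hence $\limsup_k\tilde{\mathcal{J}}_{T_k}(\theta_k)\le\mathcal{J}_{\infty}(V)$.

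Next, for compactness, write $u_i^k=-\frac{1}{\beta}B^{\top}\nabla v_k(y_i^k)$, so that $\frac{1}{2\beta}|B^{\top}\nabla v_k(y_i^k)|^2=\frac{\beta}{2}|u_i^k|^2$ and the bound above makes $\int_0^{T_k}|u_i^k|^2\,dt$ uniformly bounded in $k$. For fixed $\tau$ and $k$ large enough that $T_k\ge\tau$, the $u_i^k$ are bounded in $L^2(0,\tau;\R^{m})$; since $y_i^k(t)\in\overline{\Omega}$ and $f$ is bounded on $\overline{\Omega}$, the state equation gives a uniform $H^1(0,\tau;\R^{d})$ bound on $y_i^k$. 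A diagonal extraction over $\tau\to\infty$ and over the finitely many indices $i$ yields a subsequence with $u_i^k\weak u_i^*$ in $L^2_{loc}$, $y_i^k\weak y_i^*$ in $H^1_{loc}$, and $y_i^k\to y_i^*$ uniformly on compacts by the compact embedding $H^1(0,\tau)\hookrightarrow C([0,\tau])$. Passing to the limit in $y_i^{k\prime}=f(y_i^k)+Bu_i^k$, where the nonlinear term converges by uniform convergence of $y_i^k$ and continuity of $f$ and the control term by weak convergence, shows that $y_i^*$ solves \eqref{Ode1} for $u_i^*$, so $(y_i^*,u_i^*)$ is admissible for \eqref{ControlProblem}.

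Finally, to identify the limit as optimal, I would invoke weak lower semicontinuity: for each fixed $\tau$, $\int_0^{\tau}\ell(y_i^k)\,dt\to\int_0^{\tau}\ell(y_i^*)\,dt$ by uniform convergence, while $\int_0^{\tau}|u_i^*|^2\,dt\le\liminf_k\int_0^{\tau}|u_i^k|^2\,dt$. Summing over $i$, dividing by $I$, bounding by the horizon $T_k$, and letting first $k\to\infty$ (using the $\limsup$ bound above) and then $\tau\to\infty$ by monotone convergence yields $\frac{1}{I}\sum_i J(u_i^*,y_0^i)\le\mathcal{J}_{\infty}(V)=\frac{1}{I}\sum_i V(y_0^i)$. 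Since $J(u_i^*,y_0^i)\ge V(y_0^i)$ termwise by definition of the value function, equality must hold in each term, so each $u_i^*$ is optimal for \eqref{ControlProblem} with state $y_i^*$, and this simultaneously forces $\tilde{\mathcal{J}}_{T_k}(\theta_k)\to\mathcal{J}_{\infty}(V)$. It is important that optimality is obtained through the cost bound and the value-function inequality, and \emph{not} by passing to the limit in the feedback $\nabla v_k$, since the minimizers $v_k$ need not converge to $V$. The step I expect to be the main obstacle is the tension between $T_k\to\infty$ and the polynomial approximation in the recovery step: continuous dependence degrades like $e^{LT_k}$ through Gronwall, so $\|p_k-V\|_{C^1}$ must be chosen exponentially small in $T_k$ and $n_k$ correspondingly large. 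This is harmless since the sequences may be selected freely and only $n_k\to\infty$ is required, and it is handled by the order of the diagonal construction (fix $T_k$, then $n_k$ and $p_k$, then $\gamma_k$); nonetheless it is the point demanding the most care, together with verifying that the perturbed trajectories do not leave $\overline{\Omega}$, which is exactly where the strict margin $\delta$ in \eqref{LInftyBound} enters.
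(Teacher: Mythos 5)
Your proposal is correct and follows essentially the same route as the paper's proof: a recovery step approximating $V$ by polynomials (with the $\B_1$-part subtracted), a Gronwall/continuation argument using the margin $\delta$ to keep perturbed trajectories in $\overline{\Omega}$ and make the costs close (the paper packages this as \Cref{TimeExistenceLemma}), a choice of $\gamma_k$ annihilating the penalty to get the $\limsup$ bound via optimality, and then weak compactness, passage to the limit in the state equation, and lower semicontinuity combined with the value-function sandwich to identify the limits as optimal. The only difference is a harmless reparametrization of the diagonalization: you fix $T_k$ and choose the approximation error exponentially small, whereas the paper fixes $\varepsilon$ and lets the lemma produce $\tilde{T}_{\varepsilon}\sim\frac{1}{C}\ln(1/\varepsilon)\to\infty$.
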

\begin{rem}
{\em Assuming that \eqref{LearningProblem} admits a solution $v^{*}\in C^{1,1}(\overline{\Omega})$
and that there exist $v_{n}\in \P_n$ such that
\begin{equation}
     \lim_{n\to \infty}v_{n}=v^{*}\mbox{ in }C^{1,1},\ \mathcal{J}_{\infty}(v^{*}_{n})<\infty\mbox{ and } \lim_{n\to \infty}\mathcal{J}_{\infty}(v_n)=\mathcal{J}_{\infty}(v^{*}),
    \label{DensityHyp}
\end{equation}
it is possible to take $T_k=\infty$ for all $k\in\N$. In this case one can formulate  \eqref{PolyLearningProblem} as infinite horizon problem.
}\end{rem}

\section{Optimality Conditions}
\label{OptimalityConditionsSection}
Throughout this section we consider a basis $X=\{\phi_{i}\}_{i=1}^{M}$ for $M\in \N$, where for each $i\in \{1,\ldots,M\}$ the function $\phi_{i}$ is a monomial given by \eqref{Monomial} for a multi-index $\alpha^i\in \N^{d}$. We recall that we have defined the function $\tilde{\mathcal{J}}_{T}$ such that $ \theta\mapsto\tilde{\mathcal{J}}_{T}(\theta)=\mathcal{J}_{T}(v)$, where $v=\sum_{k=1}^{M}\theta_k\phi_k.$\par
Consider $\theta\in \R^{M}$ and $T>0$ finite. Assume that for each $i\in \{1,\ldots,I\}$, then there exists a unique solution of \eqref{ClosedLoopProblem3}   with $v=\sum_{k=1}^{M}\theta_k\phi_k$, denoted by  $y_{i}\in C^{1}([0,T],\overline{\Omega})$. Further, assume that $y_{i}(t)\in \Omega$ for all $t\in[0,T]$. Then, $\tilde{\mathcal{J}}_{T}$ is differentiable in $\theta$ and  its partial derivatives are given by
\beq\frac{\partial}{\partial \theta_{k}} \tilde{\mathcal{J}}_{T}(\theta)=\frac{1}{I\beta}\int_{0}^{T}\sum_{i=1}^{I} \nabla \phi_{k}^{\top}(y_{i})BB^{\top}\left(\nabla v(y_{i})+p_{i} \right)dt\mbox{ for }k\in \{1,\ldots,M\}, \label{PartialDerivative}\eeq
where $p_{i}$  is the solution of
\beq
-p_{i}'-Df(y_i)^{\top}p+\frac{1}{\beta}\nabla^{2}v(y_{i})BB^{\top}(\nabla v (y_i)+p_{i} )=-\nabla \ell(y_i),\quad p_{i}(T)=0
\label{AdjointEq}
\eeq
for $i\in\{1,\ldots,I\}$, where $Df$ is the Jacobian matrix of $f$.
\par

For the non differentiable term in $P_{\gamma,r}$ we recall that the subgradient of $|\cdot|_1$ is given by
$$ \partial |\cdot|_{1}(\theta)_{k}=\left\{\begin{array}{ll}
    \{1\} & \mbox{if }\theta_k>0, \\
    \{-1\} &\mbox{if } \theta_k<0, \\
    \left[ -1,1\right]  &\mbox{if } \theta_k=0,
\end{array}\right.\mbox{ for }k\in \{1,\ldots,M\}. $$
Hence, if $\theta^{*}\in \R^{M}$ is a solution of \eqref{PolyLearningProblem}  it satisfies the following optimality condition
\begin{equation}
    \nabla \tilde{\mathcal{J}}_{T}(\theta^{*})+\gamma (1-r)\theta^{*}\in- \gamma r\cdot \partial |\cdot|_{1}(\theta^{*}).
    \label{OptCond}
\end{equation}
For each $k\in \{1,\ldots,M\}$, we deduce from  \eqref{OptCond} that
$$\left |\frac{\partial}{\partial \theta_{k}}\tilde{\mathcal{J}}(\theta^{*})  +\gamma(1-r)\theta_{k}^{*}\right|< \gamma r   \implies    \theta_{k}^{*}=0 $$
and
$$\mbox{ if }\theta_{k}^{*}\neq 0,\mbox{ then } \frac{\partial}{\partial \theta_{k}}\tilde{\mathcal{J}}(\theta^{*})  +\gamma(1-r)\theta_{k}^{*}=\left\{\begin{array}{cl}
    \gamma r  & \mbox{ if }\theta_{k}^{*}<0, \\
    -\gamma r & \mbox{ if }\theta_{k}^{*}>0.
\end{array}\right.$$
In the remainder of this section we shall verify the following property which is enjoyed by
any optimal solution $\theta^{*}$:
\beq \mbox{ for each }k\in \{1,\ldots,M\},\ \theta_{k}^{*}=0 \mbox{ if and only if } B^{\top}\nabla\phi_{k}(y)=0 \mbox{ for all } y\in \R^{d}.\label{BOrthogonality}\eeq
We define the subset $\mathcal{O}(X)$  of $X$  by
\beq
\mathcal{O}(X):=\{\phi\in X: \ B^{\top}\nabla\phi(y)=0 \ \forall \ y \in \R^{d}\}
\label{OXSet}
\eeq

It is possible to further characterize  the elements of $\mathcal{O}(X)$.
\begin{lemma}
\label{BOrthogonalityLemma}
Let $\alpha=(\alpha_1,\ldots,\alpha_d)\in \N^{d}$ be a multi-index and consider $\phi_\alpha$ the monomial given by \eqref{Monomial}. Then,
\beq B^{\top}\nabla \phi_{\alpha}(y)=0\mbox{ for all }y\in\R^{d}\mbox{ if and only if } B^{\top}\cdot e_i=0\mbox{ for all }i\in \mathcal{I}(\alpha),\label{OrtVeri}\eeq
where $e_{i}$ is the $i-$th canonical vector of $\R^{d}$ and
$$\mathcal{I}(\alpha)=\{i\in \{1,\ldots,d\}: \alpha_{i}>0\}$$
\end{lemma}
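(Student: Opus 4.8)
The plan is to prove both implications by hand, using the explicit form of the gradient of a monomial together with the linear independence of monomials carrying distinct multi-indices. First I would compute $\nabla\phi_\alpha$ from the definition \eqref{Monomial}. For an index $i$ with $\alpha_i=0$ the variable $y_i$ does not appear in $\phi_\alpha$, so $\partial\phi_\alpha/\partial y_i\equiv 0$; for $i\in\mathcal{I}(\alpha)$ one gets $\partial\phi_\alpha/\partial y_i=\alpha_i\,\phi_{\alpha-e_i}$, where $\alpha-e_i$ denotes the multi-index obtained by lowering the $i$-th entry of $\alpha$ by one. Collecting these,
\[
\nabla\phi_\alpha(y)=\sum_{i\in\mathcal{I}(\alpha)}\alpha_i\,\phi_{\alpha-e_i}(y)\,e_i,
\qquad
B^{\top}\nabla\phi_\alpha(y)=\sum_{i\in\mathcal{I}(\alpha)}\alpha_i\,\phi_{\alpha-e_i}(y)\,B^{\top}e_i .
\]

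The reverse implication of \eqref{OrtVeri} is then immediate: if $B^{\top}e_i=0$ for every $i\in\mathcal{I}(\alpha)$, each summand in the second display vanishes, hence $B^{\top}\nabla\phi_\alpha\equiv 0$. For the forward implication I would argue componentwise. Fixing a coordinate $k\in\{1,\ldots,m\}$ of the identity $B^{\top}\nabla\phi_\alpha(y)=0$ produces the scalar polynomial identity
\[
\sum_{i\in\mathcal{I}(\alpha)}\alpha_i\,(B^{\top}e_i)_k\,\phi_{\alpha-e_i}(y)=0
\qquad\text{for all }y\in\R^{d}.
\]
The \emph{key observation} is that the multi-indices $\{\alpha-e_i:\ i\in\mathcal{I}(\alpha)\}$ are pairwise distinct, since $\alpha-e_i=\alpha-e_{i'}$ forces $i=i'$. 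Consequently the monomials $\phi_{\alpha-e_i}$ are linearly independent as functions on $\R^{d}$, so each coefficient $\alpha_i\,(B^{\top}e_i)_k$ must vanish; because $\alpha_i>0$ on $\mathcal{I}(\alpha)$, this yields $(B^{\top}e_i)_k=0$. Letting $k$ range over $\{1,\ldots,m\}$ gives $B^{\top}e_i=0$ for every $i\in\mathcal{I}(\alpha)$, which is exactly the right-hand side of \eqref{OrtVeri}.

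The only nontrivial ingredient is the linear independence of monomials with distinct multi-indices, which I expect to be the main (albeit entirely standard) obstacle; once it is invoked, matching coefficients in the polynomial identity concludes the argument. I would also stress that the strict positivity of the factors $\alpha_i$ on $\mathcal{I}(\alpha)$ is precisely what allows one to cancel them and solve for $B^{\top}e_i$, and that restricting the sum to $i\in\mathcal{I}(\alpha)$ (rather than all of $\{1,\ldots,d\}$) is what makes the indexing clean, since the terms with $\alpha_i=0$ contribute nothing to the gradient in the first place.
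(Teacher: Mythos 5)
Your proof is correct, but it takes a genuinely different route from the paper's. For the forward implication the paper argues by \emph{evaluation}: it treats the singleton case $\mathcal{I}(\alpha)=\{i\}$ separately, and for $|\mathcal{I}(\alpha)|>1$ it constructs test points $y^{\varepsilon}$ with $y^{\varepsilon}_i=\varepsilon^{-a}$, $y^{\varepsilon}_k=\varepsilon$ for $k\neq i$, with $a=\sum_{j\neq i}\alpha_j/(\alpha_i-1)$ chosen so that $\nabla\phi_{\alpha}(y^{\varepsilon})\to\alpha_i e_i$, extracting $B^{\top}e_i=0$ in the limit; the case $\alpha_i=1$ needs yet another special choice ($\bar y_i=0$, $\bar y_j=1$ for $j\neq i$). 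You instead write $B^{\top}\nabla\phi_{\alpha}(y)=\sum_{i\in\mathcal{I}(\alpha)}\alpha_i\,\phi_{\alpha-e_i}(y)\,B^{\top}e_i$ and match coefficients componentwise, using that the multi-indices $\alpha-e_i$, $i\in\mathcal{I}(\alpha)$, are pairwise distinct and that monomials with distinct multi-indices are linearly independent over $\R^{d}$ (equivalently, a polynomial vanishing identically on $\R^{d}$ has all coefficients zero, $\R$ being infinite). Your key observation that $\alpha-e_i=\alpha-e_{i'}$ forces $i=i'$ is exactly right, and the positivity $\alpha_i>0$ on $\mathcal{I}(\alpha)$ lets you cancel the scalar factors, so there is no gap. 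What each approach buys: your coefficient-matching argument is shorter and uniform, avoiding the paper's three-way case split and the somewhat delicate scaling construction, at the price of invoking the (entirely standard) independence lemma; the paper's proof is self-contained in that it needs nothing beyond evaluating $\nabla\phi_{\alpha}$ at well-chosen points --- indeed its evaluations can be read as a hands-on proof of the very instance of linear independence you cite. The reverse implication is identical in both proofs.
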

\begin{proof}
Let assume first that
    \beq B^{\top}\nabla \phi_{\alpha}(y)=0 \mbox{ for all }y\in\R^{d}.\label{BOrthogonalityLemmaProof:0}\eeq
    We prove now
\beq B^{\top}  e_i=0\mbox{ for all }i\in \mathcal{I}(\alpha). \label{BOrthogonalityLemmaProof:1}\eeq
If $\mathcal{I}(\alpha)=\{i\}$ for some $i\in \N$, then we have
$ \nabla\phi_{\alpha}(y)=\alpha_iy_i^{\alpha_i-1}e_i$  if $\alpha_i \ge 1$
and therefore \eqref{BOrthogonalityLemmaProof:1} is evident. On the other hand, if $\mathcal{I}(\alpha)$ contains more than one element, we take any $i\in \mathcal{I}(\alpha)$. Then, for $\varepsilon>0$, we define $y^{\varepsilon}\in \R^{d}$ by
$$ y^{\varepsilon}_k=\left\{\begin{array}{ll}
   \dis \varepsilon^{-a} & \mbox{if }k=i, \\
    \ecart\dis\varepsilon & \mbox{if }k\neq i,
\end{array}\right. \mbox{ for all }k\in \{1,\ldots,M\},$$
where in the case $\alpha_i \neq 1$
$$a=\sum_{j\neq i,j=1}^{N}\alpha_j/(\alpha_i-1)>0.$$
Evaluating $\nabla\phi_{\alpha}$ in $y^{\varepsilon}$ we obtain $$ \frac{\partial \phi_{\alpha}}{\partial y_k}(y^{\varepsilon})=\left\{\begin{array}{ll}
   \dis \alpha_i & \mbox{if }k=i, \\
    \ecart\dis\alpha_k\varepsilon^{-a-1} & \mbox{if }k\neq i
\end{array}\right. .$$
Taking $\varepsilon \to \infty$ we obtain
$$ \nabla\phi_{\alpha}(y^{\varepsilon})\to \alpha_i e_{i} $$
and therefore
$$ B^{\top}e_i=\lim_{\varepsilon\to \infty} B^{\top}\nabla\phi_{\alpha}(y^{\varepsilon})=0.$$
If $\alpha_{i}=1$, then we have
$$ \frac{\partial \phi_{\alpha} }{\partial y_{k}}(y)=\left\{\begin{array}{cc}
    \dis\prod_{j\neq i}y^{\alpha_j} & \mbox{ if }i=k, \\ \ecart
    \dis \alpha_k y^{\alpha_k-1}\prod_{j\neq k}y^{\alpha_j} & \mbox{ if }i\neq k,
\end{array}\right.\mbox{ for all }y\in\R^{d}.  $$
We choose $\bar{y}\in \R^{d}$ such that $\bar{y}_{i}=0$ and $\bar{y}_{j}=1$ for all $j\in \{1,\ldots,d\}\setminus\{i\}$.
Evaluating $\nabla\phi_{\alpha}$ in $\bar{y}$ we again obtain
$$ B^{\top}e_i=B^{\top}\nabla\phi_{\alpha}(\bar{y})=0. $$
Since the $i\in \mathcal{I}(\alpha)$ is arbitrary, we have proved \eqref{BOrthogonalityLemmaProof:1}. \par
Now we assume that \eqref{BOrthogonalityLemmaProof:1} holds  and we prove \eqref{BOrthogonalityLemmaProof:0}. For every $j\in \{1,\ldots,d\}\setminus\mathcal{I}(\alpha)$, it is clear that
$$ \frac{\partial\phi_{\alpha}}{\partial y_j}(y)=0\mbox{ for all }y\in \R^{d}.$$
From this, we have that
$$ B^{\top}\nabla\phi_{\alpha}(y)=\sum_{i\in \mathcal{I}(\alpha)}\frac{\partial\phi_{\alpha}}{\partial y_i}(y)B^{\top}e_{i}=0,$$
which concludes the proof.
\end{proof}
\begin{lemma}
\label{BasisReductionLemma}
Consider $\theta\in \R^{M}$ and $T\in (0,\infty)$. Assume that $y_i(t)\in \Omega$ for all $i\in \{1,\ldots,I\}$ and $t\in [0,T]$, where $y_i$ is the solution of the closed loop problem \eqref{ClosedLoopProblem3}  with $v=\sum_{k=1}^{M}\theta_k\phi_k$ and $y_0=y_0^i$. Then,
\beq  \frac{\partial}{\partial \theta_k}\tilde{\mathcal{J}}_{T}(\theta)=0\mbox{ for every }k\in\{1,\ldots,M\}, \mbox{ such that }\phi_k\in \mathcal{O}(X).\label{BasisReductionLemma:1}\eeq
Moreover, if $\theta^{*}\in \R^{M}$ is an optimal solution of \eqref{PolyLearningProblem}, then
\beq \theta^{*}_k =0 \mbox{ for every }k\in\{1,\ldots,M\},\mbox{ such that }\phi_k\in \mathcal{O}(X).\label{BasisReductionLemma:2}\eeq
\end{lemma}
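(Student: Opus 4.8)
\section*{Proof proposal for \Cref{BasisReductionLemma}}

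The statement \eqref{BasisReductionLemma:1} is a direct consequence of the derivative formula \eqref{PartialDerivative}. Under the standing hypothesis $y_i(t)\in\Omega$ on $[0,T]$, the expression \eqref{PartialDerivative} is valid, and I would simply factor the integrand: writing $\nabla\phi_k^{\top}(y_i)BB^{\top}(\nabla v(y_i)+p_i)=\big(B^{\top}\nabla\phi_k(y_i)\big)^{\top}B^{\top}(\nabla v(y_i)+p_i)$. If $\phi_k\in\mathcal{O}(X)$, then by the defining property \eqref{OXSet} we have $B^{\top}\nabla\phi_k(y)=0$ for every $y\in\R^{d}$, in particular along the trajectory $y=y_i(t)$, so the first factor vanishes identically and the integrand is $0$. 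Hence $\partial_{\theta_k}\tilde{\mathcal{J}}_{T}(\theta)=0$, which is \eqref{BasisReductionLemma:1}.

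For \eqref{BasisReductionLemma:2} the cleanest route, which I would take, is to observe that $\tilde{\mathcal{J}}_{T}$ does not depend \emph{at all} on the coordinates $\theta_k$ with $\phi_k\in\mathcal{O}(X)$, not merely that its partial derivative there vanishes. Indeed, writing $v=\sum_j\theta_j\phi_j$, the closed loop dynamics \eqref{ClosedLoopProblem3} couple to $v$ only through $BB^{\top}\nabla v=\sum_j\theta_j\, B\big(B^{\top}\nabla\phi_j\big)$, and the running cost in \eqref{eq:Jdef} only through $B^{\top}\nabla v=\sum_j\theta_j\, B^{\top}\nabla\phi_j$. For every index with $\phi_j\in\mathcal{O}(X)$ the factor $B^{\top}\nabla\phi_j$ is identically zero, so the corresponding term drops out of both expressions. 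Consequently the trajectories $y_i$, the feasibility of \eqref{ClosedLoopProblem3} (existence of a solution staying in $\overline{\Omega}$), and therefore the value $\tilde{\mathcal{J}}_{T}(\theta)$, all depend only on the coordinates indexed by $X\setminus\mathcal{O}(X)$.

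Given an optimal $\theta^{*}$, I would then define $\hat{\theta}$ by setting $\hat{\theta}_k=0$ whenever $\phi_k\in\mathcal{O}(X)$ and $\hat{\theta}_k=\theta_k^{*}$ otherwise. By the previous paragraph $\hat\theta$ is feasible and $\tilde{\mathcal{J}}_{T}(\hat\theta)=\tilde{\mathcal{J}}_{T}(\theta^{*})$. The penalty \eqref{PenaltyTerm} is separable, $P_{\gamma,r}(\theta)=\gamma\sum_k\big(\tfrac{1-r}{2}\theta_k^2+r|\theta_k|\big)$, and each summand is nonnegative and equals zero if and only if $\theta_k=0$ (using $\gamma>0$ and $r\in[0,1]$); hence $P_{\gamma,r}(\hat\theta)\le P_{\gamma,r}(\theta^{*})$, with equality precisely when $\theta_k^{*}=0$ for all $k$ with $\phi_k\in\mathcal{O}(X)$. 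Adding the two contributions shows $\hat\theta$ is at least as good as $\theta^{*}$, and strictly better unless those coordinates already vanish; optimality of $\theta^{*}$ then forces \eqref{BasisReductionLemma:2}.

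The point that needs care, and the only genuine obstacle, is the independence claim in the second paragraph: one must verify that the objective is constant in the $\mathcal{O}(X)$ directions rather than merely having vanishing gradient there. This is exactly what allows the comparison of $\hat\theta$ and $\theta^{*}$ to go through without assuming the optimal trajectories remain in the open set $\Omega$ (i.e.\ without invoking differentiability of $\tilde{\mathcal{J}}_{T}$ at $\theta^{*}$). An alternative derivation of \eqref{BasisReductionLemma:2} from \eqref{BasisReductionLemma:1} via the optimality condition \eqref{OptCond} is possible, but it requires this interiority, so I would prefer the structural argument above.
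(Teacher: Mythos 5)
Your proposal is correct and follows essentially the same route as the paper: part \eqref{BasisReductionLemma:1} directly from \eqref{PartialDerivative}, and part \eqref{BasisReductionLemma:2} by zeroing the $\mathcal{O}(X)$-coordinates, observing that the closed-loop trajectories and hence $\tilde{\mathcal{J}}_{T}$ are unchanged (since both the dynamics and the running cost couple to $v$ only through $B^{\top}\nabla v$), while the penalty $P_{\gamma,r}$ strictly decreases. The paper phrases this as a contradiction zeroing a single offending coordinate, whereas you zero all of them at once and argue directly, but the key invariance observation --- which, as you rightly note, is stronger than the vanishing gradient of \eqref{BasisReductionLemma:1} and avoids any interiority assumption at $\theta^{*}$ --- is the same.
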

\begin{proof}
For every $\theta\in \R^{M}$, \eqref{BasisReductionLemma:1} is a direct consequence of \eqref{PartialDerivative}. To prove \eqref{BasisReductionLemma:2} we proceed by contradiction. Let $\theta^{*}\in \R^{M}$  be an optimal solution for \eqref{PolyLearningProblem} and assume that there exists   $\bar{k}\in\{1,\ldots,M\}$ such that
\beq B^{\top}\nabla \phi_{\bar{k}}=0\mbox{ in }\overline{\Omega}\mbox{, but } \theta_{\bar{k}}^{*}\neq 0.\label{BOrthogonality2}\eeq
Then, we define $\tilde{\theta}\in\R^{M}$ by
$$\tilde{\theta}_{j}=\left\{\begin{array}{cc}
 \dis\theta_{j}^{*}    & \mbox{ if }j\neq \bar{k} \\ \ecart
\dis 0 & \mbox{ if }j=\bar{k}
\end{array}\right. \mbox{ for all }j\in\{1,\ldots,M\} .$$
By \eqref{BOrthogonality2} we have
$$ \sum_{k=1}^{M}B^{\top}\nabla \phi_{k}(y)\tilde{\theta}_{k}=\sum_{k=1}^{M}B^{\top}\nabla \phi_{k}(y)\theta^{*}_{k}\mbox{ for all } y\mbox{ in }\Omega.$$
Consequently, for each initial condition $y_{0}^{i}$,  the solution of  \eqref{ClosedLoopProblem3} for $\tilde{v}=\sum_{k=1}^{M}\phi_{k}\tilde{\theta}_{k}$ is the same as for $v^{*}=\sum_{k=1}^{M}\phi_{k}\theta^{*}_{k}$. Therefore we get $$ \tilde{\mathcal{J}}_{T}(\tilde{\theta})=\tilde{\mathcal{J}}_{T}(\theta^{*}).$$
Further, it is clear that $P_{\gamma,r}(\theta^{*})>P_{\gamma,r}(\tilde{\theta})$, because $\theta^{*}_{\bar{k}}\neq 0$. Thus
$$  \tilde{\mathcal{J}}_{T}(\tilde{\theta})+P_{\gamma,r}(\tilde{\theta})<\tilde{\mathcal{J}}_{T}(\theta^{*}) +P_{\gamma,r}(\theta^{*}),$$
which is a contradiction.
\end{proof}

\begin{rem}\label{BasisReductionRem}
{\em From Lemma \ref{BasisReductionLemma} we conclude that basis functions $\phi_k \in \mathcal{O}(X)$ do not contribute to the optimal solution of \eqref{PolyLearningProblem}. Therefore they should be dismissed before computing the minimizers \eqref{PolyLearningProblem}. This can be done utilizing Lemma \ref{BOrthogonalityLemma}. In this way  we replace $X$ by $X\setminus\mathcal{O}(X)$.}
\end{rem}

\section{Optimization Algorithm}
\label{OptimizationAlgoSection}
In this section we consider $T\in (0,\infty)$ and $X=\{\phi_{i}\}_{i=1}^{M}$ with $M\in \N$, where for each $i\in \{1,\ldots,M\}$ the function $\phi_{i}$ is a monomial of the form \eqref{Monomial} for a multi-index $\alpha_i\in \N^{d}$. To solve \eqref{PolyLearningProblem} we use a linear proximal point method with an adaption Barzilai-Borwein method for choosing the step length, which proved to be efficient for high dimensional problems (see \citep{AzKK}, \citep{Barzilai}, and \citep{Raydan} for a convergence analysis in the smooth case). In contrast to the smooth setting, we are not aware of a thorough convergence analysis of this particular step size choice in the nonsmooth case. However, from a practical point of view, the method performs reliably for our purposes.  \par

We now describe the algorithm that we use to solve \eqref{PolyLearningProblem}. We denote the $k-$th element of the sequence produced by the algorithm by $\theta^{k}$, the step size by $s^k$, and we define at each iteration
\beq d^{k}:=\nabla \tilde{\mathcal{J}}_{T}(\theta^{k})+\gamma(1-r)\theta^{k}.
\label{gradient}\eeq
We use the proximal point update rule as is described  in section 10.2 in \citep{Beck}, namely we take $\theta^{k+1}$ such that
\begin{equation}
    \theta^{k+1}=\argmin_{\vartheta\in\R^{M}}
    \left\{d^{k}\cdot\left(\vartheta-\theta^k\right) +\frac{1}{2s^k}|\theta^k-\vartheta|_{2}^{2}+\alpha r|\vartheta|_{1}\right\}.
    \label{ProximalPointUpdate}
\end{equation}
Defining
$$ shrink(a,b)=\left\{\begin{array}{cc}
    a-b & \mbox{ if }a-b>0  \\
   a+b& \mbox{ if } a+b<0 \\
    0   & \mbox{ if } |a|\leq |b|.
\end{array}\right. $$
the update rule \eqref{ProximalPointUpdate} can be expressed as
\begin{equation}
\theta_j^{k+1}=shrink\left(\theta^{k}_j -s^{k} d^k_j,s^{k}\gamma r\right).
    \label{ProximalPointUpdate2}
\end{equation}
for each $j=1,\ldots,M.$

If the cardinality of $X$ is large  and $\theta^{k}$ has many non-zero entries, the evaluation of $\mathcal{J}_{T}$ and $\nabla \mathcal{J}_{T}$ can be very expensive. Consequently it is useful  to initialize sparsely and to  monitor the sparsity level during the iterations of the algorithm.
The $\ell^1$ term will enhance sparsity in the limit. During the iterations we only update one coordinate $j^{k}$ chosen by a greedy rule proposed in \citep{WuLa} (see also \citep{ShiTuXu}), in order to keep $\theta^{k}$ as sparse as possible. Namely, instead of updating  all the coordinates of $\theta^{k+1}$ by rule \eqref{ProximalPointUpdate2}, we  determine the coordinates to be updated by \eqref{ProximalPointUpdate2} by means of
\beq  j^{k+1}\in \argmax_{j\in \{1,\ldots,M\}}\min_{z\in \partial|\cdot|(\theta_j)}\left|d_{j}^{k}+z\right|.\label{GreedyRule}\eeq
\par
Concerning initialization of $\theta$ it is not always possible to do this by 0 since the solution of \eqref{ClosedLoopProblem3} with $v=0$ could have a large norm causing numerical difficulties due to the  evaluation of the polynomials or it may not exist for all $t\in [0,T]$. For this reason an initial guess for $\theta$ has to be chosen that at least ensures the boundedness of the solutions of the closed loop problems \eqref{ClosedLoopProblem3}. This depends on the nature of $f$ and the choice of $T$ in \eqref{PolyLearningProblem}. We shall return to this point in the course of discussing the numerical examples.

To choose the step size $s^k$ we use the backtracking line search described in section 10.3.3 in \citep{Beck}, starting from an initial guess $s_{0}^{j}$.
 That is, for $\kappa\in (0,1)$ and $\beta\in (0,1)$, we take $s^{k}=s_0^{k}\beta^{i}$ such that $i$ is the smallest natural number which satisfies
\beq \tilde{\mathcal{J}}_{T}(\theta^{+}) \leq \tilde{\mathcal{J}}_{T}(\theta^{k}) - \frac{\kappa}{s_0^{k}\beta^{i}}|\theta^{k}-\theta^{+}|^2, \label{Backtracking}\eeq
where either all or only the coordinate determined by \eqref{GreedyRule} are updated by
\eqref{ProximalPointUpdate2}. We use the Barzilai-Borwein step size as initial guess, namely we take $s_0^k$ as
\beq  s_{0}^{k}=\left\{\begin{array}{ll}
     \dis \big[(\theta_{k}-\theta_{k-1})\cdot(d_{k}-d_{k-1})\big]/|d_{k}-d_{k-1}|^{2}   &\mbox{ if } k\mbox{ is }odd,\\
      \ecart\dis |\theta_{k}-\theta_{k-1}|^{2}/\big[(\theta_{k}-\theta_{k-1})\cdot(d_{k}-d_{k-1})\big] & \mbox{ if }k\mbox{ is }even.
\end{array}\right. \label{BB}\eeq
We summarize the algorithm as follows:
\begin{algorithm}[H]
\caption{Sparse polynomial learning algorithm.}
\label{Alg1}
\begin{algorithmic}[1]
\Require An initial guess $\theta^{0}\in\R^{M}$,$\gamma>0$, $\kappa>0$, $\beta\in (0,1)$, $T>0$, $r\in [0,1]$, $s_0\in (0,\infty)$.
\Ensure An approximated stationary point $\theta^{*}$ of \eqref{PolyLearningProblem}.
\State $k=1 $
\State For $\theta^0$, set $d^0$ using \eqref{gradient}
and $J_0=\tilde{\mathcal{J}}_{T}(\theta^0)+P_{\gamma,r}(\theta^0).$
\State Use \eqref{Backtracking} to obtain $s_{0}$.
\State Use \eqref{ProximalPointUpdate} or \eqref{ProximalPointUpdate2} with $j=j^{1}$ given by \eqref{GreedyRule} to get $\theta^{1}$.
\State Obtain $d^{1}$ by using \eqref{gradient} and set $J_1=\tilde{\mathcal{J}}_{T}(\theta^1)+P_{\gamma,r}(\theta^1).$
\While{ $|d^k|>gtol$ and $|J_k-J_{k-1}|>tol $ }

\State Obtain $s_{0}^{k}$ by using \eqref{BB} and  choose $s^{k}$ using \eqref{Backtracking}.
\State Use \eqref{ProximalPointUpdate} or \eqref{ProximalPointUpdate2} with $j=j^{k+1}$ given by \eqref{GreedyRule} to get $\theta^{k+1}$.
\State Obtain $d^{k+1}$ by using \eqref{gradient} and set $J_{k+1}=\tilde{\mathcal{J}}_{T}(\theta^{k+1})+P_{\gamma,r}(\theta^{k+1}).$
\State Set $k=k+1$.
\EndWhile
\Return $\theta^{*}:=\theta_{k}$.

\end{algorithmic}

\end{algorithm}


\section{Polynomial Basis Evaluation}
\label{EvaluationSection}
Concerning the implementation, we address the problem of an efficient evaluation of $\tilde{\mathcal{J}}(\theta)$ and $\nabla\tilde{\mathcal{J}}(\theta)$. Indeed, to evaluate $\tilde{\mathcal{J}}$ and $\nabla\tilde{\mathcal{J}}$, we need to solve \eqref{ClosedLoopProblem3} and \eqref{AdjointEq}. We solve these systems numerically, which involves multiple evaluations of the elements of the basis $\B_n$ or $\S_n$ and their derivatives. Therefore it is essential to do this efficiently. For simplicity, we only describe how to evaluate the elements of $\B_{n}$, but the case of $\S_{n}$ is analogous. Our approach for polynomial evaluation is related to \citep{Carnicer} and \citep{Lodha}.\par
Before describing how we evaluate the elements of $\B_n$, we need to recall some concepts from graph theory. We only give some basic definitions following \citep{Rosen},  and refer to  \citep{KoVy} for further description.\par

A directed graph $G=(V,E)$ is a pair, where $V$ is the set of nodes or vertices of the graph and $E\subset V\times V$ is the set of edges of $G$.\par  For a graph $G=(V,E)$ a directed path that connects $a\in V$ and $b\in V$ is a sequence of vertices  $\{v_{i}\}_{i=1}^{k}\subset V$ such that $(v_{i},v_{i+1})\in E$  for all $i\in \{1,\ldots,k-1\}$, $a=v_{1}$ and $b=v_k$, furthermore we say that a directed path is a directed circuit or cycle if $a=b$. Similarly, an undirected path that connects $a$ and $b$ is a sequence of vertices  $\{v_{i}\}_{i=1}^{k}\subset V$ such that $(v_{i},v_{i+1})\in E$ or $(v_{i+1},v_{i})\in E$  for all $i\in\{1,\ldots,k-1\}$, $a=v_{1}$ and $b=v_k$, furthermore we say that an undirected path is a circuit or cycle if $a=b$.\par
A directed rooted tree is a graph $G=(V,E)$ such that there is no undirected circuit in $G$ and it has a node $v_r\in V$ which is connected to every $v\in V\setminus\{v_r\}$.\par
For two graphs $G=(V,E)$ and $G'=(V',E')$, we say that $G$ is a subgraph of $G$ if $V'\subset V$ and $E'\subset E$.\par
For a graph $G=(V,E)$, a minimum spanning rooted tree is a subgraph $T=(V,E')$ of $G$ such that $T$ is a rooted tree. \par
We also need to recall a fundamental algorithm to traverse a graph, which is called the breadth-first search (BFS),~\citep{KoVy}.  For $G$ a directed graph and $r$ a node in $G$ connected to every other node in $G$, the BFS algorithm returns a minimum spanning rooted tree with $r$ as its root.
\begin{algorithm}[H]
\begin{algorithmic}[1]
\Require A graph $G=(V,E)$ and $v_r\in V$.
\Ensure A subgraph $G'=(V,E')$
\State Set $E'=\emptyset$
\State For every $v\in V$ set $color(v)=0$.
\State Choose~$v_r \in V$, set~$I=1$, and~$q=\{(1,v_r)\}$.

\While{ $q\neq \emptyset$.}
\State Set $v$ to be such $(1,v)\in q$.
\For{$\tilde{v}\in V$ such that $(v,\tilde{v})\in E$}
\If{ $color(\tilde{v})=0$}
\State Set $color(\tilde{v}):= 1$.
\State Set~$I:=I +1$,~$q:=q\cup \{(I,\tilde{v})\}$.
\State Set $E':=E'\cup \{(v,\tilde{v})\}$
\EndIf
\EndFor
\State Set $q:=q\setminus \{(1,v)\}$,~$I:=I-1$ and ~$q:=\{(i-1,u): \forall (i,u)\in q\}$.

\EndWhile
\Return $V'$
\end{algorithmic}
\label{BFS}
\caption{Breadth-first search (BFS)}
\end{algorithm}
We are now prepared to describe the evaluation of all the elements of $X$ in a given point $y\in \R^{d}$. We recall that for simplicity we only consider the case $X=\B_{n}$, later we explain how to do it in other cases.

Let us consider the directed  graph $G=(\Lambda_n,E_n)$ where $\Lambda_n$ is given \eqref{AlphaIndexes} and $ E_{n}\subset \Lambda_{n}\times\Lambda_n$ is defined by \beq \forall \ \tilde{\alpha},\alpha\in\Lambda_n:\   (\tilde{\alpha},\alpha)\in E_{n} \mbox{ if and only if } \alpha=\tilde{\alpha}+e_j\mbox{ for an unique }j\in \{1,\ldots,d\},\label{EvaluationTreeEdgeDef}\eeq
where $e_j$ is the $j$-th canonical vector of $\R^{d}$. Let $T$ be a  minimum spanning rooted tree of $G$, where $\alpha^{0}=(0,\ldots,0)\in \N^{d}$ is the root of $T$. Then, for every $\alpha\in \Lambda_{n}\setminus\{\alpha^{0}\}$ there exists a unique  $\tilde{\alpha} \in \Lambda_{n}$ such that $(\tilde{\alpha},\alpha)$ is a vertex of $T$, which in turn implies that there exists $j\in \{1,\ldots,d\}$ such $\alpha=\tilde{\alpha}+e_j$. Therefore we have
\beq \phi_{\alpha}(y)=y_j^{\alpha_j}\prod_{i=1,i\neq j}^{d}y_i^{\alpha_i}=y_j\cdot y_j^{\alpha_j-1}\prod_{i=1,i\neq j}^{d}y_i^{\alpha_i}=y_j\phi_{\tilde{\alpha}}(y),\ \forall \ y  \in \R^{d}. \label{Evaluationrule}\eeq
For a given $y\in\R^{d}$ we evaluate all the elements of $\B_{n}$ by performing a BFS in $T$ starting from the root and using \eqref{Evaluationrule}. More precisely, we denote by $c(\alpha)$ the value of $\phi_{\alpha}(y)$. It is clear that $c(\alpha^{0})=1$. Now, let $\tilde{\alpha}^{i}$ be the node visited in the $i-$th iteration of BFS. Then for each element in $\alpha\in\Lambda_n$ such that $(\tilde{\alpha}^{i},\alpha)\in T$ we obtain $c(\alpha)$ using \eqref{Evaluationrule}, i.e.
\beq c(\alpha)=y_jc(\tilde{\alpha}^{i}),\label{Evaluationrule2}\eeq
where $j$ is the unique index such that $\alpha=\tilde{\alpha}^{i}+e_j$.\par

 Similarly to \eqref{Evaluationrule}, the partial derivatives of $\phi_{\alpha}$ satisfy
\beq \frac{\partial\phi_{\alpha}}{\partial y_{i}}(y)=\left\{\begin{array}{ll}
    \dis \alpha_{i}\phi_{\alpha-e_{i}}(y) & \mbox{ if }\alpha_{i}>0,  \\
    \ecart\dis  0 & \mbox{ if } \alpha_{i}=0
\end{array}\right.
\label{EvaluationruleD} \mbox{ for all }i\in \{1,\ldots,d\} \eeq
and
\beq
\frac{\partial^{2}\phi_{\alpha}}{\partial y_{i}\partial y_{j}}(y)=\left\{\begin{array}{ll}
    \dis\alpha_{i}\alpha_{j}\phi_{\alpha-e_{i}-e_{j}}(y) & \mbox{ if }i\neq j,\ i\geq 1\mbox{ and }j\geq 1 , \\
     \ecart\dis \alpha_{i}(\alpha_{i}-1)\phi_{\alpha-2e_{i}}(y) & \mbox{ if }i=j \mbox { and }\geq \alpha_{i}=2,\\
     \ecart 0&\mbox{ if }i=j\mbox{ and } \alpha_{i}=1,\\
     \ecart 0& \mbox{ if }i\neq j\mbox{ and }\alpha_{i}=0\mbox{ or }\alpha_{j}=0,
     \end{array}\right.
     \label{EvaluationruleDD}
\eeq
for all $i,j\in \{1,\ldots,m\}$, where $e_{i}\in \R^{d}$ is the $i-th$ canonical vector in $\R^{d}$. Therefore, by the definition of $c$, we have
\beq \frac{\partial\phi_{\alpha}}{\partial y_{i}}(y)=\left\{\begin{array}{ll}
    \dis \alpha_{i}c(\alpha-e_{i}) & \mbox{ if }\alpha_{i}>0,  \\
    \ecart\dis  0 & \mbox{ if } \alpha_{i}=0
\end{array}\right.
\label{EvaluationruleD2}\forall i \in \{1,\ldots,d\} \eeq
and
\beq
\frac{\partial^{2}\phi_{\alpha}}{\partial y_{i}\partial y_{j}}(y)=\left\{\begin{array}{ll}
    \dis\alpha_{i}\alpha_{j}c(\alpha-e_{i}-e_{j}) & \mbox{ if }i\neq j,\ i\geq 1\mbox{ and }j\geq 1 , \\
     \ecart\dis \alpha_{i}(\alpha_{i}-1)c(\alpha-2e_{i})& \mbox{ if }i=j \mbox { and }\geq \alpha_{i}=2,\\
     \ecart 0&\mbox{ if }i=j\mbox{ and } \alpha_{i}=1,\\
     \ecart 0& \mbox{ if }i\neq j\mbox{ and }\alpha_{i}=0\mbox{ or }\alpha_{j}=0,
     \end{array}\right.
     \label{EvaluationruleDD2}
\eeq
for all $i,j\in \{1,\ldots,d\}$.\par
Now we address the evaluation of the elements of $\S_n$ and their derivatives. We consider a graph $\bar{G}=(\Gamma_n,\bar{E}_n)$, where $\Gamma_n$ is given by \eqref{HyperCrossMultiindex} and $\bar{E}_n\subset\Gamma_n\times\Gamma_n$ is defined by
\beq \forall \ \tilde{\alpha},\alpha\in\Gamma_n:\   (\tilde{\alpha},\alpha)\in \bar{E}_{n} \mbox{ if and only if } \alpha=\tilde{\alpha}+e_j\mbox{ for an unique }j\in \{1,\ldots,d\}.\eeq
By the definition of  $\Gamma_n$, it is clear that for all $\alpha\in \Lambda_n\setminus{\alpha^{0}} $, there exists at least one $\tilde{\alpha\in\Lambda}$ which satisfies $\alpha=\tilde{\alpha}+e_j$ for some $j\in\{1,\ldots,m\}$, therefore $\alpha^{0}$ is connected to every $\alpha\in\Lambda_n$. Then, the evaluation of the elements of $\S_n$ is analogous to the evaluation of $\B_n$.\par
Finally, in virtue of \Cref{BasisReductionRem}, we know that not all the elements of either $X=\S_{n}$ or $X=\B_n$ are contributing to the optimal solution. In this case we should consider a reduced basis given by $\mathcal{O}(X)$ in \eqref{OXSet}. Nevertheless, we can not use our approach directly, because it is not possible to ensure that we can construct an spanning tree with only the multi-indexes that correspond to $X\setminus\mathcal{O}(X)$. More generally, for a given $\theta\in \R^{M}$ we only need to evaluate the intersection between $X\setminus\mathcal{O}(X)$ and $\{\phi_{i}\in X:\ \theta_i\neq 0\}$, i.e. the intersection of the support of $\theta$ and $X\setminus\mathcal{O}(X)$. To address this problem, consider a subset $\tilde{X}$  of $X$ and a spanning tree $T$ rooted at $\alpha^{0}$ for either $\Lambda_n$ or $\Gamma_n$, as appropriate. Then we extract a sub-tree $\tilde T$ from $T$ by traversing $T$ starting from each element of $\tilde{X}$.
\section{Generalization}
\label{GeneralizationSection}
When learning approximation schemes on a finite training set, it is of special interest whether the design objective can also be accomplished for configurations which are not contained in the training set. In our case this amounts to achieving stable trajectories for initial data outside of the training set.

To demonstrate a scenario of what can be expected we focus on the case when we train with only one initial condition $\tilde{y}_{0}\in\Omega$.
 We consider $X=\{\phi_{i}\}_{i=1}^{M}\subset C^{1,1}(\overline{\Omega})$ for $M\in \N$, $\theta^{*}\in\R^{M}$, and $v_{*}=\sum_{i=1}^{M}\theta_i^{*}\phi_i$. For $y_0\in\R^{d}$, we denote the solution of \eqref{ClosedLoopProblem3} with $v=v_{*}$ by $y(\cdot,y_0)$.\par
We assume that there exists a neighbourhood around $\tilde{y}_0$ and $0$ where the system \eqref{ClosedLoopProblem3}
is exponentially stable, namely
\beq
\exists \rho>0,\ \kappa>0,\ K\geq 0 \mbox{ such that }|y(t,y_{0})|\leq K e^{-\kappa t}|y_0|\mbox{ for all }y_0\in B(\tilde{y}_0,\rho)\cup B(0,\rho).
\label{ExponentialNei}\eeq
 We now provide a sufficient condition guaranteeing that there exists an open neighbourhood around the trajectory $\mathcal{T}=\{y(\cdot,\tilde{y}_0)(t):\ t>0\}\subset\R^{d}$ such that for all initial conditions in this neighbourhood the solution is exponentially stable.\par
For an arbitrary $\tau>0$ and $\delta x>0$ we consider the linearised system
\beq x'=A(t)x ,\quad x(0)=\delta x, \label{Linearized system} \eeq
where $A(t)=Df({y}(t,\tilde y_0))-\frac{1}{\beta}BB^{\top}\nabla^{2}v_{*}({y}(t,\tilde y_0))$, and define the associated solution mapping $ S(\tau)\delta x=x(\tau,\delta x).$

\begin{prop}
\label{GeneralizationPropKK}
Assume that \eqref{ExponentialNei} holds for $\tilde y_0$.  Then, there exits $\tilde{\rho}>0$  such that
\beq  |y(t,y_0)|\leq K e^{-\kappa t}|y_0|, \mbox{ for all }y_0 \in N(\overline{\mathcal{T}}),\label{GeneralizationPropKK:1}\eeq
where $N(\overline{\mathcal{T}})=\{ y_0\in\R^{d}:\ dist(y_0,\overline{\mathcal{T}})<\tilde{\rho}\}$.
\end{prop}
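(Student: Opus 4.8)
# Proof Proposal for Proposition 4

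The plan is to exploit the exponential stability assumption \eqref{ExponentialNei} at the two anchor points $\tilde y_0$ and $0$ together with a continuity-in-initial-data argument along the compact closure $\overline{\mathcal{T}}$. The key geometric fact is that, because the trajectory $y(\cdot,\tilde y_0)$ converges exponentially to $0$, its closure $\overline{\mathcal{T}}$ is compact, and it is ``pinched'' between a neighbourhood of $\tilde y_0$ (covering small times) and a neighbourhood of $0$ (covering large times). So I would first split the time axis: fix $\tau^{*}>0$ large enough that $y(t,\tilde y_0)\in B(0,\rho/2)$ for all $t\ge \tau^{*}$, which is possible by \eqref{ExponentialNei} applied at $\tilde y_0$. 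The tail of the trajectory then lives inside the exponentially stable ball around $0$.

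The main estimate is a perturbation/continuity bound for the flow. For the finite time window $[0,\tau^{*}]$ I would use that $f$ is Lipschitz on bounded sets and $v_{*}$ is a polynomial (hence $C^{\infty}$ with locally Lipschitz gradient), so the right-hand side of \eqref{ClosedLoopProblem3} is Lipschitz on $\overline{\Omega}$; Gr\"onwall's inequality then gives, for initial data $y_0$ near a point $y(t_0,\tilde y_0)$ of the trajectory, a bound of the form
\beq
|y(t,y_0)-y(t,\tilde y_0)|\le e^{L\tau^{*}}\,\mathrm{dist}(y_0,\overline{\mathcal{T}}),
\eeq
where $L$ is the Lipschitz constant of the closed-loop vector field on a fixed bounded set containing $\overline{\Omega}$. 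Choosing $\tilde\rho>0$ small enough that $e^{L\tau^{*}}\tilde\rho<\rho/2$, any trajectory started within $\tilde\rho$ of $\overline{\mathcal{T}}$ remains within $\rho/2$ of the reference trajectory on $[0,\tau^{*}]$, and in particular at time $\tau^{*}$ it lands inside $B(0,\rho)$. From that time onward the stability estimate in \eqref{ExponentialNei} around $0$ applies directly, yielding the exponential decay with the same rate $\kappa$.

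The remaining task is to assemble these two pieces into the single clean bound \eqref{GeneralizationPropKK:1} with constant $K$ and rate $\kappa$, rather than a piecewise estimate. On $[0,\tau^{*}]$ the trajectory stays in a fixed compact set, so $|y(t,y_0)|$ is bounded by a constant multiple of $|y_0|$ (after possibly shrinking $\tilde\rho$ so that $|y_0|$ is bounded below away from degeneracy, or by absorbing the finite-time factor $e^{L\tau^{*}}$ into $K$); on $[\tau^{*},\infty)$ we invoke the $0$-stability estimate at the re-initialised state $y(\tau^{*},y_0)\in B(0,\rho)$ and fold the overlap factor $e^{\kappa\tau^{*}}$ into $K$. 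The one subtlety I would watch is ensuring that the neighbourhood radius is chosen \emph{uniformly} along the whole (non-compact-in-time but compact-in-space) trajectory; this is where compactness of $\overline{\mathcal{T}}$ is essential, since it lets me use a single Lipschitz constant $L$ and a single transit time $\tau^{*}$.

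I expect the main obstacle to be precisely this uniformity: the naive Gr\"onwall factor $e^{Lt}$ blows up as $t\to\infty$, so the argument cannot be run over all of $(0,\infty)$ at once. The resolution is the two-phase decomposition above — finite-time continuity to funnel trajectories into $B(0,\rho)$, then genuine exponential contraction from the $0$-stability hypothesis — and the verification that the radius $\tilde\rho$ can be fixed once and for all from the data $\rho,\kappa,K,L,\tau^{*}$, none of which depend on the particular point of $\overline{\mathcal{T}}$ under consideration. The role of the linearised system \eqref{Linearized system} and its solution operator $S(\tau)$ is to make rigorous the claim that the exponential-stability property transfers to nearby trajectories; I would use it to quantify the contraction of the variational flow, but the core logic is the funnelling-plus-contraction scheme just described.
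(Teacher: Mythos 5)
Your proposal is correct in substance, but it takes a genuinely different route from the paper's own proof. The paper propagates the stability property \emph{forward} from $B(\tilde y_0,\rho)$: for each $\tau>0$ it considers the reachable set $B(\tau)=\{y(\tau,y_0):\ y_0\in B(\tilde y_0,\rho)\}$, whose points inherit the decay estimate by the Bellman (semigroup) principle, and shows via the implicit function theorem --- with $DG_{y_0}\delta x=S(\tau)\delta x$ invertible by Liouville's theorem --- that $B(\tau)$ contains a ball $B(y(\tau,\tilde y_0),\rho_\tau)$; compactness of $\overline{\mathcal{T}}$ (since $y(\tau,\tilde y_0)\to 0$) then yields a finite subcover and hence a uniform $\tilde\rho$. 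You instead funnel \emph{into the origin}: a single transit time $\tau^{*}$, a single Lipschitz constant $L$ on a compact neighbourhood of $\overline{\mathcal{T}}$, and the choice $e^{L\tau^{*}}\tilde\rho<\rho/2$ push every tube trajectory into $B(0,\rho)$ in time at most $\tau^{*}$, after which only the stability hypothesis at $0$ is invoked. Your version is more elementary (no implicit function theorem, no Liouville, no differentiability of the flow with respect to the initial datum) and more quantitative, giving an explicit admissible $\tilde\rho$; the price is that your final constant is an enlarged $K'$ absorbing $e^{(L+\kappa)\tau^{*}}$ and a factor of order $\sup_{\overline{\mathcal{T}}}|\cdot|/\rho$, whereas the paper's covering argument nominally retains the original $K,\kappa$ on each covered ball. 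Three small repairs to your write-up: your displayed Gr\"onwall bound should compare $y(t,y_0)$ with the \emph{time-shifted} reference $y(t+t_0,\tilde y_0)$ when $y_0$ is near an intermediate point $y(t_0,\tilde y_0)$, not with $y(t,\tilde y_0)$; the lower bound on $|y_0|$ needed to convert the finite-time boundedness into $K'e^{-\kappa t}|y_0|$ does not come from shrinking $\tilde\rho$ (that fails near the origin) --- rather split the tube, applying \eqref{ExponentialNei} directly on $N(\overline{\mathcal{T}})\cap B(0,\rho)$ and using $|y_0|\geq\rho$ off that ball; and the linearised system \eqref{Linearized system} is genuinely dispensable in your scheme (Gr\"onwall suffices, and the variational flow is not contracting in general), whereas in the paper it is the essential tool making the flow map open.
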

\begin{proof}
For arbitrary $\tau>0$ define the set
$$ B(\tau)=\{y(\tau,y_0):\ y_0\in B(\tilde{y}_0,\rho)\}. $$
Note that $y(\tau,\tilde{y}_0)\in B(\tau)$ and that by the Bellman's principle
$$ |y(t,y_0)|\leq K e^{-kt}|y_0|, \ t>0,\mbox{ for all }y_0\in B(\tau).$$
We need to argue that $\inf_{\tau >0} diam (B(\tau)) >0$.
For this purpose we apply the implicit function theorem to the mapping
$$  G:B(\tilde{y}_0,\rho)\times\R^{d}\subset\R^{d}\times\R^{d}\to \R^{d} $$
$$ G(y_0,z)=y(\tau,y_0)-z, $$
to argue that $y(\tau,\tilde{y}_0)\in int(B(\tau)).$ Indeed, $G(\tilde{y}_0,y(\tau,\tilde{y}_0))=0$ and $DG_{y_0}$ is characterized by
$$ DG_{y_0}\delta x=S(\tau)\delta x. $$
By Liouville's theorem $S(\tau)$ is an isomorphism. Hence for each $\tau>0$  there exits $\rho_{\tau}>0$ such that $B(y(\tau,\tilde{y}_0),\rho_{\tau})\subset B_{\tau}$. By construction,
$$ |y(t,y_0)|\leq Ke^{-\kappa t}|y_0|,\ \forall\ t>0,\forall \ y_0\in B(y(\tau,\tilde{y}_0)\rho_{\tau}).$$
We consider the covering $$\bigcup_{\tau>0}B(y(\tau,\tilde{y_0},\rho_{\tau}))\cup B(\tilde{y_0},\rho))\cup B(0,\rho)\supset \bar{\mathcal{T}}.$$
Since $\lim_{\tau \to \infty} y(\tau,\tilde y_0)=0$,  the set $\bar{\mathcal{T}}$ is compact. Therefore there exits a finite subcover, and consequently some $\tilde{\rho}>0$ such  that \eqref{GeneralizationPropKK:1} holds.
\end{proof}

\begin{rem}
{\em In the case that $\{\tilde y_0^i\}_{i=1}^I$ initial conditions are used for the learning step,  the construction of Theorem \ref{GeneralizationPropKK} can be repeated for each one of them leading to $I$ tubes containing the trajectories $\{y(\cdot,\tilde y_0^i)\}_{i=1}^I$. For initial conditions in these tubes we have guaranteed exponential stabilization. Moreover, since these tubes all intersect at the origin it can be expected that, as $I$ increases, the neighorhood of the origin for which stabilization is guaranteed increases as well. }
\end{rem}

\section{Numerical Experiments}
\label{NumericalExperimentsSection}
We implement \Cref{Alg1} to solve problem \eqref{PolyLearningProblem} for 4 different problems. These problems are the stabilization of an LC-circuit, stabilization of a modified Van der Pol oscillator, stabilization of the Allen-Cahn equation, and optimal consensus for the Cucker-Smale model. For every experiment, we shall specify the computational time horizon, and the sets of initial conditions for training and testing.  For  all experiments the ordinary differential equations are solved by the Crank-Nicolson algorithm.  The arguments of the monomials  are normalized by $l$,  i.e. we redefine $\phi_{\alpha}(y)$ by
$\phi_{\alpha}(y)=\prod_{i=1}^{d}\left(\frac{y}{l}\right)^{\alpha_i}$.
\par We measure the performance of our approach by comparing the control $u^*$  obtained by solving  the open loop problem for every initial condition in the test set with
$\hat{u}=-\frac{1}{\beta}B^{\top}\nabla \hat{v}(y)$, where $\hat{v}$ is the solution of \eqref{PolyLearningProblem} and $y$ is the corresponding solution of \eqref{ClosedLoopProblem3}. We then compute the mean normalized squared error in $L^{2}((0,T);\R^{m})$ for the controls  by
$$
SSE_u(\{\hat{u}_{i}\}_{i=1}^{N},\{u_i^{*}\}_{i=1}^{N})=
\sum_{i=1}^{N}\int_{0}^{T}|\hat{u_i}-u_i^{*}|^{2}dt\Big/\sum_{i=1}^{N}\int_{0}^{T}|u_i^{*}|^{2}dt,
$$
and analogously $SSE_y(\{\hat{y}_{i}\}_{i=1}^{N},\{y_i^{*}\}_{i=1}^{N})$ for the states.
We also compare the optimal value of the open loop problem with the  objective function of \eqref{ControlProblem} evaluated in $\hat{u}$ by computing the mean normalized squared error
$$ SSE_J(\{\hat{u}_{i}\}_{i=1}^{N},\{u_i^{*}\}_{i=1}^{N})=\sum_{i=1}^{N}
|J(u_i^{*},y_0^{i})-J(\hat{u}_i,y_0^i)|^2\Big/\sum_{i=1}^{N}J(u_i^{*},y_0^{i})^2.$$
\par
In order to compute an optimal control for the non-linear problems, we solve the open loop problem by  a gradient descent algorithm with a backtracking line-search. For the linear-quadratic problem \eqref{LQP} we use the algebraic Riccati equation to obtain the optimal feedback controls.  \par

\subsection{LC-circuit} We consider the linear-quadratic problem
\beq
\begin{array}{c}
    \dis \min_{u\in L^{2}((0,T),\R)}\frac{1}{2} \int_0^{T}|y|^{2}dt+\frac{\beta}{2} \int_0^{T}|u|^2 dt \\ \ecart
    \dis s.t.\ y'= Ay+Bu,\ y(0)=y_0,
\end{array}
\label{LQP}
\eeq
with
\beq A=\left(\begin{array}{ccc}
     0 & 1 & -1\\
    -1 & 0 & 0 \\
     1 & 0 & 1
\end{array}\right) \mbox{ and } B=\left(\begin{array}{c}
     0  \\
     1 \\
     0
\end{array}\right).\eeq
\par
 Here we pay special attention to the convergence of the  learning problem when the cardinality of the training set increases. We set $T=10$, $l=10$, $\gamma=10^{-30}$, and $r=0.1$,  and randomly choose  two sets of $10$ and $100$ initial conditions from $\overline{\Omega}$ and call these sets $\mathcal{Y}_{train}$ and $\mathcal{Y}_{test}$.
 Here we initialize with $v_0=0$.
 From $\mathcal{Y}_{train}$ we take $\{\mathcal{Y}_i\}_{i=1}^{20}$ as a sequence of increasing subsets of $\mathcal{Y}_{train}$, such that for each $i\in\{1,\ldots,20\}$ the cardinality of $\mathcal{Y}_i$ is $i$. For each $i\in\{1,\ldots,20\}$ we solve \eqref{PolyLearningProblem} with $X=\B_{2}\setminus\left(\mathcal{B}_1\cup\mathcal{O}(\B_{2})\right)$, where $\mathcal{O}(\cdot)$ is given by  \eqref{OXSet}. Subsequently for every $y_0\in\mathcal{Y}_{test}$  the control obtained through the solution of \eqref{PolyLearningProblem} is compared  with the optimal one.
\par
In \Cref{LCcircuit:ErrorTableTest} we present mean normalized errors
 for the objective function, the controls, and the states, for  increasing training sizes. In all the cases where the cardinality of the training set is bigger than one, the mean percentage error is smaller than 2. In \Cref{LCcircuit:MeanPerErr2} we show the error when the cardinality of the training set is bigger than one.

\begin{table}[h!]
\centering
 \begin{tabular}{ |c||c|c|c|  }\hline
Training size & control error (\%)& state error (\%)& objective error (\%)\\ \hline
1 & 17.56778 & 15.40232 & 2.78807\\
2 & 1.40877 & 0.38278 & 0.00123\\
5 & 1.33517 & 0.27748 & 0.00045\\
10 & 0.45580 & 0.07792 & 0.00001\\
\hline\hline\end{tabular}
\vspace{3pt}
\caption{$SSE_u, SSE_y, SSE_J$ in percent for LC-circuit example.}
\label{LCcircuit:ErrorTableTest}
\end{table}

\begin{figure}[h]
    \includegraphics[width=0.9\textwidth]{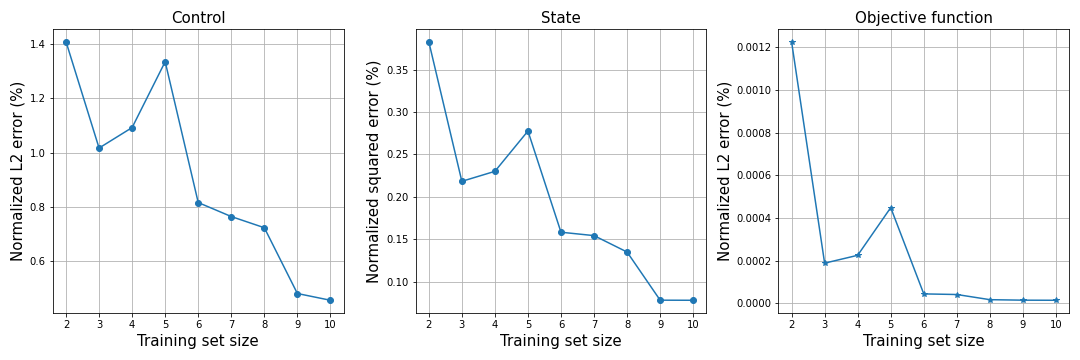}
\caption{$SSE_u, SSE_y, SSE_J$ for LC-circuit example.}
\label{LCcircuit:MeanPerErr2}
\end{figure}

To further illustrate  the performance of our approach, in \Cref{LCcircuit:Scatter}, we provide the scatter plot between the true value of the open loop problem and the objective function evaluated in the learned control for every point in the test set. This carried out  for $\mathcal{Y}_1$, $\mathcal{Y}_2$ , and $\mathcal{Y}_{20}$.  For $\mathcal{Y}_2$ , and $\mathcal{Y}_{20}$ the regression line of the scatter points in the test set are close to the identity line, which is already suggested by the results in Table \ref{LCcircuit:ErrorTableTest}.

\begin{figure}
\includegraphics[width=0.95\textwidth]{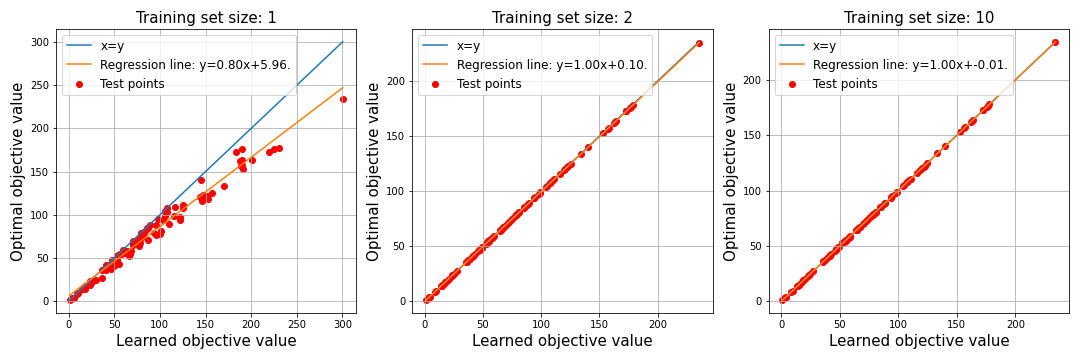}
\caption{Validation scatter for LC-circuit example.}
\label{LCcircuit:Scatter}
\end{figure}

\subsection{Modified Van der Pol Oscillator}
We investigate the stabilization problem
\beq
\begin{array}{c}
\dis\min_{u\in L^{2}((0,T);\R)} \frac{1}{2}\int_{0}^{T}|y|^{2}dt+\frac{\beta}{2}\int_{0}^{T}|u|^{2}dt\\
\ecart \dis s.t. \quad y''=\nu (1-y^{2})y'-y+\mu y^3+u, \ (y(0),y'(0))=(y_0,v_0).\\
\end{array}
\eeq

In the previous linear-quadratic example we  addressed the convergence when the cardinality of the training set increases. Given the structure of the problem we only used polynomials of degree 2. In the example
 we investigate the effect of the degree of the polynomials.
\par
  The parameters are set to be $T=3$, $\beta=10^{-3}$, $\nu=\frac{3}{2}$, $l=10$, $\mu=\frac{4}{5}$,  and $X_n=\B_n\setminus(\mathcal{B}_1\cup\mathcal{O}(\B_n))$ for $n\in \{4,5,6,7,8\}$. We sample uniformly at random in $\Omega$ a set of $5$ initial conditions as training set and a set of 100 initial conditions as test set. As in the previous example, we take an increasing sequence of subsets of the training set with cardinalities from 1 to 5.
 It is important to mention that if we choose $v=0$ as initial guess, the norm of the solutions of the closed loop problem \eqref{ClosedLoopProblem3} may increase exponentially with time. Therefore, in order to ensure the boundedness of the state of the closed loop problem we proceed as follows, for $X_{4}$ we choose $v_0(y_1,y_2)=\mu\beta y_1^{3}y_2+\frac{\beta\nu}{2}y_2^{2}$ as initial guess and for $n\geq 4$ we use the solution of the previous degree as initial guess. Note that this  requires the polynomial degree to be at least 4.
\par
In \Cref{VanDerPol:TrainErrorplot} and \Cref{VanDerPol:TestErrorplot}  the training  and test errors are depicted. The errors when training with one initial condition are not shown in these figures, because in this case the learned feedback fails to stabilize many of test initial conditions. It is observed that the training errors $SS_J$ and $SS_u$ are decreasing with the degree  when training with more than one initial condition. On the other hand, the training $SS_y$ is not deceasing, but it stays small. We recall that \Cref{ConvergenceTheo2}  assures convergence for the objective function of the training initial conditions, which is consistent with the third subplot of \Cref{VanDerPol:TrainErrorplot}.

In \Cref{VanDerPol:TestErrorplot} the errors  are small for all the training subsets.  The errors are decreasing with the degree once  the cardinality of the initial conditions is larger than four.

In the first graph of \Cref{VanDerPol:SupportCard} the cardinality of the support $\{i:\theta_i\neq 0 \}$ of the  coefficients of the optimal feedback mapping is presented. The second one corresponds to the same cardinality expressed as a percentage of the cardinality of $X_n$. It is observed that the cardinality of the support increases with the degree, but it decreases as percentage of the cardinality of $X_n$. Moreover, for each degree, the cardinality decreases as the  training size increases. The cardinality of $X_n$ equals $9,14,20,27,35$ for $n= 4,\dots,8.$
\par

We turn our attention to the phase planes in \Cref{VanDerPol:Phaseplane}. The first phase plane is composed by the optimal trajectories for the open loop problem, the other ones are composed of the trajectories of the solutions to the closed loop problems for 1, 2 and 5 initial conditions. In the first phase plane we observe that there is a one dimensional manifold with two branches converging to 0 and all the trajectories converges to this manifold. A similar behaviour is seen in the third and forth phase planes. On the other hand, when training with one initial condition, it is seen that the learned feedback law is not capable of stabilizing the test initial conditions near the right-hand branch of the manifold.
\par
We conclude that when training with two or more initial conditions we are able to stabilize all the initial conditions in the test set.

\begin{figure}
\includegraphics[width=0.99\textwidth]{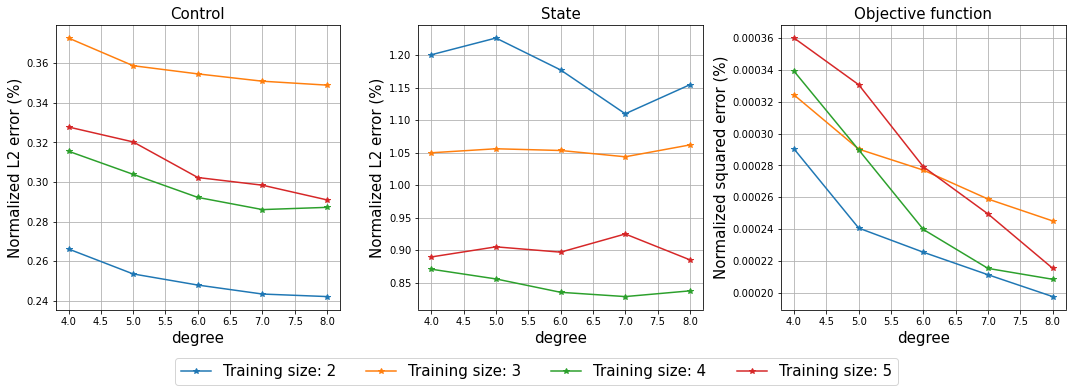}
\caption{Training errors $SSE_u$, $SSE_J$ and $SSE_y$ for modified Van der Pol oscillator example.}
\label{VanDerPol:TrainErrorplot}
\end{figure}

\begin{figure}
\includegraphics[width=0.99\textwidth]{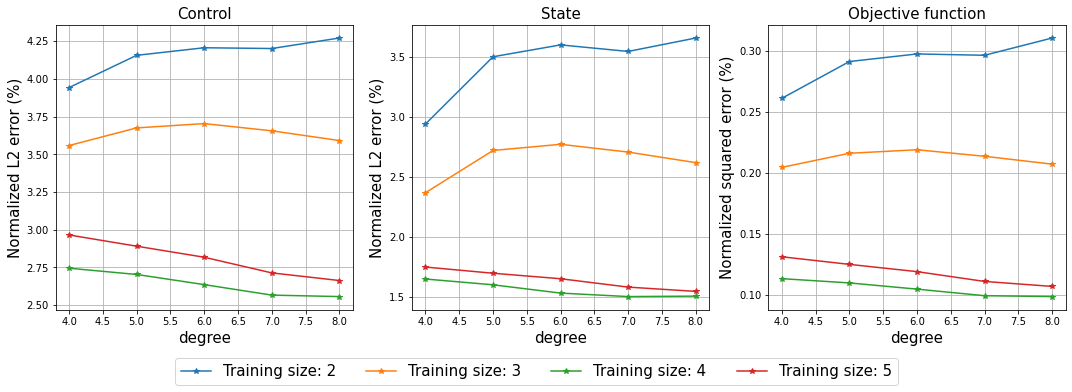}
\caption{Test errors $SSE_u$, $SSE_J$ and $SSE_y$ for modified Van der Pol oscillator example.}
\label{VanDerPol:TestErrorplot}
\end{figure}

\begin{figure}
\centering
\includegraphics[width=0.8\textwidth]{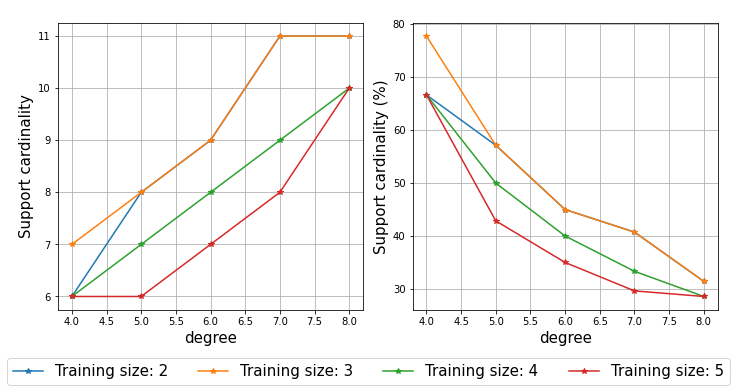}
\caption{Support cardinality of the solutions to the learning problems for modified Van der Pol oscillator example.}
\label{VanDerPol:SupportCard}
\end{figure}


%
%

\begin{figure}
\includegraphics[width=0.95\textwidth]{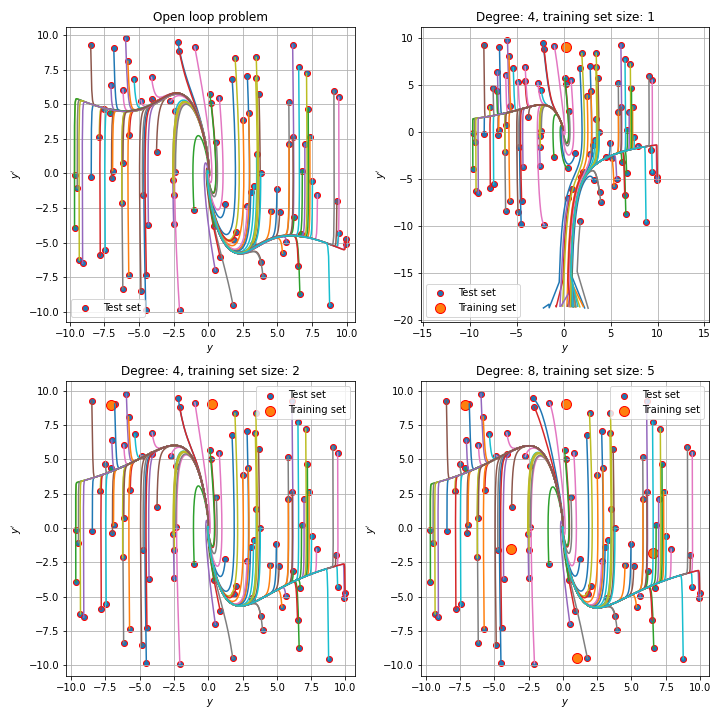}
\caption{Phase plane for test initial conditions for modified Van der Pol oscillator example.}
\label{VanDerPol:Phaseplane}
\end{figure}

\subsection{Allen-Cahn Equation.}
We turn to stabilization of  the Allen-Cahn equation with the  Neumann boundary conditions and consider
\beq
\begin{array}{c}
     \dis \min_{u_i\in L^{2}([0,T),\R)} \int_{0}^{T}\int_{-1}^{1}|y(x,t)|^{2}dxdt+\beta\int_{0}^{T}|u(t)|^{2}dt \\
     \ecart\dis y'(t,x)=\nu\frac{\partial^{2}y}{\partial x^{2}}(t,x)+y(t,x)(1-y^2(t,x))+\sum_{i=1}^{3}\chi_{\omega_i}(x)u_i(t)\\
     \ecart\dis\frac{\partial y}{\partial x}(t,-1)=\frac{\partial y}{\partial x}(t,1)=0, \quad  y(0,x)=y_0(x)
     \label{AllenCahn}
\end{array}
\eeq
for $x\in (-1,1)$ and $t>0$, where  $\nu=0.5$, $T=4$ and $\chi_{\omega_i}$ are the indicators functions of the sets $\omega_{1}=(-0.7,-0.4)$, $\omega_{2}=(-0.2,0.2)$, and $\omega_{3}=(0.4,0.7)$. This problem admits 3 steady states, which are $-1$, $0$ and $1$, with $0$ being unstable. \par
 Since problem \eqref{AllenCahn} is infinite-dimensional, we discretize it by using a Chebyshev spectral collocation method with $19$ degrees of freedom. The first integral in \eqref{AllenCahn}  is approximated by means of the Clenshaw-Curtis quadrature. For further details on the Chebyshev spectral collocation method and the Clenshaw-Curtis quadrature we refer to \cite[Chapters 6, 19]{Boyd}, and \cite[Chapters 7, 12, 13]{Trefethen}.\par
 Due to the high dimensionality of this problem, the evaluation of the feedback law is computationally expensive. In order to mitigate this  difficulty the hyperbolic cross technique is used for the construction of the basis. Further,  sparsity of solution,  can be influenced by the  penalty coefficient $\gamma$. With this in mind, we pay attention to the influence of  $\gamma$ on the sparsity of the solution and performance of the obtained feedback laws.
\par
For the results presented below, we choose $r=0.9$, $l=10$, $X=\mathcal{S}_6\setminus(\mathcal{B}_1\cup \mathcal{O}(\B_6))$, $v_0=0$,  and 10 different values for $\gamma$ which are listed in \Cref{AllCahn:fig:NComps}. The cardinality of $X$ is 350. We sample uniformly at random 5 and 100 initial conditions in ${\Omega}$ as training and test sets, respectively. We train progressively starting with $\gamma=10^{-1}$, for which   we use $0$ as initial guess for the value function. For the remaining $\gamma$ values initialization is done with the solution of the previous $\gamma$ value. The total training time was approx 2 hrs. In Table \ref{AllCahn:fig:NComps} the cardinality of the  supports are reported.  Clearly, the size the support increases as $\gamma$ decreases. \par

In \Cref{AllenCahn:Errorplot} we present the  normalized errors calculated for the  training and test sets for each chosen $\gamma$. We observe that in accordance with \Cref{ConvergenceTheo2} all the training errors decrease when $\gamma$ tends to 0. We observe the same behaviour for the test error. However, the test mean $L_2$ normalized error for the controls stops decreasing at about  $40\%$. 
\par

 In \Cref{AllenCahn:Scatter} we present the scatter plot between the value of the objective of the closed loop problem and the value obtained by our approach when  $\gamma=10^{-1}$ and $\gamma=10^{-6}$.  In the first scatter we see that the slope and the intercept of the regression line are around $0.55$ and $0.12$, respectively. Moreover we observe  a high dispersion of the point around the regression line. On the other hand, in the second scatter the regression line is closer to the identity line and the dispersion around it is clearly lower than in the first scatter.\par

Summarizing, in accordance with \eqref{ConvergenceTheo2} we see that all the error measures decrease when $\gamma$ goes to 0 in the training. The same is true for the error measures on the test set.

\begin{table}
\centering
\scalebox{0.8}{
\begin{tabular}{ |c||c|c|c|c|c|c|c|c|c|c|}\hline
$\gamma$& 1.0e-01 & 8.9e-02 & 7.8e-02 & 6.7e-02 & 5.6e-02 & 4.4e-02 & 3.3e-02 & 2.2e-02 & 1.1e-02 & 1.0e-06 \\ \hline
\makecell{Support \\ cardinality}
 & 10 & 11 & 14 & 14 & 21 & 22 & 22 & 23 & 24 & 28 \\
\hline\end{tabular}}
\vspace{0.5pt}
\caption{Support cardinality of the solution obtained for each $\gamma$ for Allen-Cahn equation example.}
\label{AllCahn:fig:NComps}
\end{table}
\begin{figure}
\centering
\includegraphics[width=0.9\textwidth]{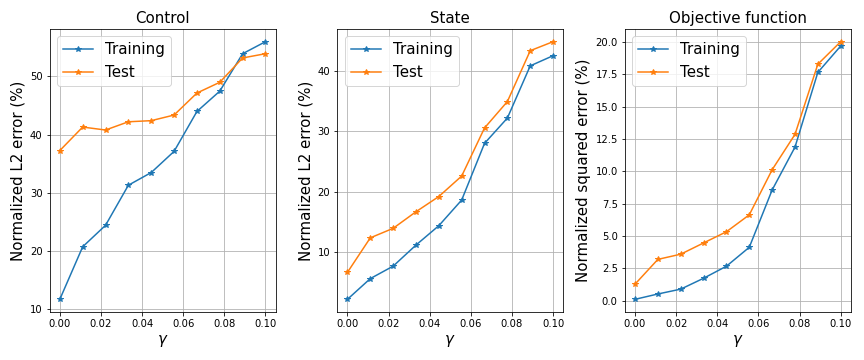}
\caption{Training and test mean normalized  errors for Allen-Cahn equation example.}
\label{AllenCahn:Errorplot}
\end{figure}
\begin{figure}
    \includegraphics[width=0.45\textwidth]{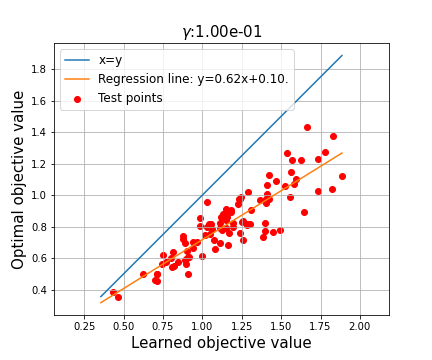}
     \includegraphics[width=0.45\textwidth]{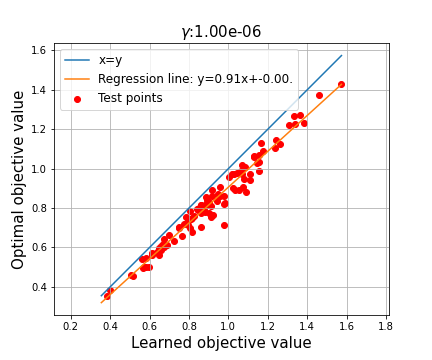}
\caption{Validation scatter for Allen-Cahn equation example.}
\label{AllenCahn:Scatter}
\end{figure}

 \subsection{Optimal Consensus for Cucker-Smale Model}
 We consider a set of $N$ agents with states $(x_i(t),y_i(t))\in\R^{2}\times\R^{2}$ for $i\in \{1,\ldots,N\}$ governed by the Cucker-Smale (see \citep{CuckerSmale}) dynamics. The system is controlled in such a way that the velocity of every agent asymptotically  approaches the mean mean velocity. In order to achieve this in an optimal sense, we solve (see \citep{Bailo,Camponigro})
  \beq  \begin{array}{l}
    \dis \min_{u_i\in L^{2}((0,\infty);\R^{2})} \frac{1}{N}\sum_{i=1}^{N}\int_{0}^{T}|y_i-\bar{y}|^{2}dt+\beta\sum_{i=1}^{N}\int_{0}^{T}|u_i|^{2}dt     \\
   \ecart \dis s.t.\quad x'_i=y_i,\quad y_i'=\frac{1}{N}\sum_{j=1}^{N}a(|x_i-x_j|)(y_j-y_i) +u_i,\\ \ecart \qquad \;\, \dis    x_i(0)=x^{i}_0,\quad y_i(0)=y^{i}_0,
  \end{array} \eeq
  where $a:[0,\infty)\to\R$ is a communication kernel given by $ a(r)=\frac{K}{(1+r^{2})}$ and $\bar{y}(t)$ is the mean velocity, that is
  $ \bar{y}(t)=\frac{1}{N}\sum_{j=1}^{N}y_j(t)$.
  \par
We set  $N=10$, $T=3$, $K=10^{-1}$, and $\beta=10^{-2}$ for the Cucker-Smale problem. For the learning problem we take $X=\S_4\setminus (\B_1\cup O(\S_4))$, $\gamma=10^{-5}$, $r=0.9$ and $l=5$. We sample uniformly at random in $\Omega$ a set of $5$ initial conditions as training set  and a set of 100 initial conditions as test set. We take $v_0(x_1,\ldots,x_{N},y_1,\ldots,y_{N})=10K\beta\sum_{i=1}^{N}|v_{i}|^{2}$ as initial guess, which ensures the boundness of the solutions for the closed loop problem \eqref{ClosedLoopProblem3}.

\par
We present the  errors for the training and tests phases in  \Cref{CuckerSmale:ErrorTable}. We also provide the scatter plot between the value of the open loop problem and the value obtained by our approach in \Cref{CuckerSmale:Scatter}. In all the cases the error is below $2\%$. Further, in the scatter plot the slope of the regression line is around $1$, the intercept is $0.14$, and there is only a small dispersion around it. Finally, we mention that the cardinality of $X$ is 650, while the cardinality of the support of the solution of the learning problem is $58$. Thus  we only need approximately $8.9\%$ of the elements of $X$ to achieve the error shown in \Cref{CuckerSmale:ErrorTable}.
\begin{figure}
\centering
\includegraphics[width=0.5\textwidth]{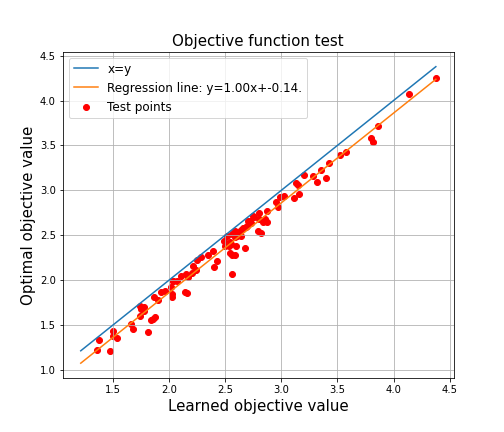}
\caption{Learned value vs optimal value on the test set for optimal consensus example.}
\label{CuckerSmale:Scatter}
\end{figure}
\begin{table}[h!]
\centering
 \begin{tabular}{ |c||c|c|c|  }\hline
& control error (\%)& state error (\%)& objective error (\%)\\ \hline
Training & 1.3090 & 0.7376 &  0.1932\\
Test & 0.1744 & 0.1032 &  0.4580\\
\hline\hline\end{tabular}
\vspace{3pt}
\caption{$SSE_u, SSE_y, SSE_J$ in percent for optimal consensus example.}
\label{CuckerSmale:ErrorTable}
\end{table}

\section{Conclusion}
A learning based  method to obtain feedback laws for nonlinear optimal control problems and their approximation by polynomials was presented. The proposed methodology was
implemented  in python and  tested on 4  problems. These experiments demonstrate the efficiency of the approach for  obtaining approximative   feedback laws for   non-linear and high dimensional problems. For the linear problem that we tested,  our approach was capable of finding feedback laws close to those provided by  the Riccati synthesis. Of course, this relates to the fact that for linear-quadratic problems the value function is a quadratic polynomial, and it is thus contained in our ansatz space.





\appendix
\section*{Appendix}
\label{Apendice}
\begin{lemma}
\label{TimeExistenceLemma}
 Let  $\varepsilon\in (0,1)$, $\sigma\in (0,l)$, and $v$ and $v_{\varepsilon}$ be functions in $C^{1,1}(\overline{\Omega})$ satisfying
\beq \label{eq10.1}
 \left|B^{\top}(\nabla v-\nabla v_{\varepsilon})\right|_{C(\overline{\Omega})}< \varepsilon,  \mbox{ and }\mathcal{J}_{\infty}(v)<\infty. \eeq
Further assume that
\beq \label{TimeExistenceLemma:LInftyBound}
 |y_i(t)|\leq l-\sigma, \text{ for } i=1,\dots, I, \; \forall t \ge 0,
\eeq
where $y_{i}$ is the solution of \eqref{ClosedLoopProblem3}, and define
\beq C=2\left|f-\frac{1}{\beta}BB^{\top}\nabla v\right|_{Lip(\overline{\Omega})}+\frac{|B|^2}{\beta^{2}}. \label{TimeExistenceLemma:C} \eeq
Then, there exists $T_{\varepsilon}\in (0,\infty]$ satisfying
\beq
T_{\varepsilon}\geq \frac{1}{C}\ln\left(1+\frac{\sigma^{2} C}{4\varepsilon^{2}}\right),
\label{TimeExistenceLemma:Teps}
\eeq
such that for each $i\in\{1,\ldots,I\}$ problem \eqref{ClosedLoopProblem3} admits a solution $y_i^{\varepsilon}\in C^{1}([0,T_{\varepsilon}];\Omega)$, and
\beq |y^{\varepsilon}_{i}(t)-y_{i}(t)|^{2}\leq \frac{\varepsilon^2}{C}(e^{Ct}-1) \mbox{ and }|y^{\varepsilon}_{i}(t)|\leq l-\frac{\sigma}{2}\mbox{ for all }t\in [0,T_{\varepsilon}].\label{TimeExistenceLemma:conv} \eeq
Moreover, defining $\tilde{T}_{\varepsilon}>0$ by
\beq  \tilde{T}_{\varepsilon}:=\frac{1}{C}\ln\left( 1+\frac{C\sigma^{2}}{4\varepsilon^{1/2}} \right) \label{TimeExistenceLemma:Teps2}\eeq
we have
\beq  \left|\mathcal{J}_{\tilde{T}_{\varepsilon}}(v_{\varepsilon})-\mathcal{J}_{\tilde{T}_{\varepsilon}}(v)\right|\leq K \varepsilon^{1/4}, \label{TimeExistenceLemma:JConver}\eeq
where $K$ is a constant independent of $\varepsilon$.
\end{lemma}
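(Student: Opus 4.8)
The plan is to view $y_i^\varepsilon$ as a perturbation of $y_i$ driven by the small forcing $B^\top(\nabla v-\nabla v_\varepsilon)$, to obtain a Gronwall estimate for their difference, and then to run a continuation argument that simultaneously secures existence of $y_i^\varepsilon$ and its confinement inside $\Omega$ up to time $T_\varepsilon$. Writing $g=f-\frac1\beta BB^\top\nabla v$ and $g_\varepsilon=f-\frac1\beta BB^\top\nabla v_\varepsilon$ for the two vector fields, both are Lipschitz on the compact set $\overline\Omega$ (as $f$ is Lipschitz on bounded sets and $v,v_\varepsilon\in C^{1,1}(\overline\Omega)$), so local solutions of \eqref{ClosedLoopProblem3} with $v=v_\varepsilon$ exist. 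For $w:=y_i^\varepsilon-y_i$, and as long as both trajectories remain in $\overline\Omega$, I decompose $w'=[g(y_i^\varepsilon)-g(y_i)]+[g_\varepsilon(y_i^\varepsilon)-g(y_i^\varepsilon)]$. The first bracket is bounded by $|g|_{Lip(\overline\Omega)}|w|$ and the second by $\frac{|B|}{\beta}|B^\top(\nabla v-\nabla v_\varepsilon)|_{C(\overline\Omega)}\le\frac{|B|}{\beta}\varepsilon$ via \eqref{eq10.1}. Hence $\frac{d}{dt}|w|^2\le 2|g|_{Lip(\overline\Omega)}|w|^2+\frac{2|B|}{\beta}\varepsilon|w|$, and Young's inequality $\frac{2|B|}{\beta}\varepsilon|w|\le\frac{|B|^2}{\beta^2}|w|^2+\varepsilon^2$ produces exactly $\frac{d}{dt}|w|^2\le C|w|^2+\varepsilon^2$ with $C$ as in \eqref{TimeExistenceLemma:C}. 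Since $w(0)=0$, the scalar Gronwall lemma gives the first inequality in \eqref{TimeExistenceLemma:conv}.

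For the confinement step, on any interval where the above holds one has $|y_i^\varepsilon(t)|\le|y_i(t)|+|w(t)|\le(l-\sigma)+\sqrt{\tfrac{\varepsilon^2}{C}(e^{Ct}-1)}$ by \eqref{TimeExistenceLemma:LInftyBound}, and the right-hand side is $\le l-\tfrac\sigma2$ precisely when $e^{Ct}-1\le\tfrac{C\sigma^2}{4\varepsilon^2}$, i.e. $t\le T_\varepsilon$ with $T_\varepsilon$ as in \eqref{TimeExistenceLemma:Teps}. To upgrade this a priori estimate into genuine existence on $[0,T_\varepsilon]$, I let $T^\ast$ be the supremum of times up to which $y_i^\varepsilon$ exists in $\overline\Omega$ with $|y_i^\varepsilon|\le l-\tfrac\sigma2$; on $[0,T^\ast)$ the differential inequality and thus the Gronwall bound are valid, so $|y_i^\varepsilon|$ stays strictly below $l-\tfrac\sigma2$ for $t<\min\{T^\ast,T_\varepsilon\}$, and a standard open/closed continuation argument rules out $T^\ast<T_\varepsilon$. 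This delivers $y_i^\varepsilon\in C^1([0,T_\varepsilon];\Omega)$ together with both bounds in \eqref{TimeExistenceLemma:conv}.

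For the functional estimate \eqref{TimeExistenceLemma:JConver}, I first note that $\tilde T_\varepsilon\le T_\varepsilon$, since $\varepsilon\in(0,1)$ forces $\varepsilon^{1/2}\ge\varepsilon^2$, so everything above holds on $[0,\tilde T_\varepsilon]$; moreover evaluating the first bound of \eqref{TimeExistenceLemma:conv} at $t\le\tilde T_\varepsilon$ collapses to $|w(t)|^2\le\tfrac{\varepsilon^2}{C}(e^{C\tilde T_\varepsilon}-1)=\tfrac{\sigma^2}{4}\varepsilon^{3/2}$, that is $|w(t)|\le\tfrac\sigma2\varepsilon^{3/4}$ uniformly. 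I then bound the difference of the two integrands of $\mathcal J$ pointwise. The running-cost term is handled by Lipschitz continuity of $\ell$ on the compact set $\overline\Omega$. For the control term I write $|a|^2-|b|^2=(a-b)\cdot(a+b)$ with $a=B^\top\nabla v_\varepsilon(y_i^\varepsilon)$, $b=B^\top\nabla v(y_i)$, bound $|a+b|$ by a constant using boundedness of $B^\top\nabla v$ on $\overline\Omega$ together with \eqref{eq10.1}, and estimate $|a-b|\le|B^\top(\nabla v_\varepsilon-\nabla v)(y_i^\varepsilon)|+|B^\top\nabla v(y_i^\varepsilon)-B^\top\nabla v(y_i)|\le\varepsilon+L|w|\le C'\varepsilon^{3/4}$, using that $B^\top\nabla v$ is Lipschitz on $\overline\Omega$ and $\varepsilon\le\varepsilon^{3/4}$. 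Thus each integrand differs by at most $C''\varepsilon^{3/4}$, and integrating over $[0,\tilde T_\varepsilon]$ and averaging over $i$ yields $|\mathcal J_{\tilde T_\varepsilon}(v_\varepsilon)-\mathcal J_{\tilde T_\varepsilon}(v)|\le C''\varepsilon^{3/4}\tilde T_\varepsilon$.

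The main obstacle, and the reason behind the peculiar choice of $\tilde T_\varepsilon$, is the final balancing of these two competing factors. Although $\tilde T_\varepsilon=O(\log(1/\varepsilon))\to\infty$, the exponent $3/4$ on the state error was bought precisely so that $\varepsilon^{3/4}\tilde T_\varepsilon=O(\varepsilon^{3/4}\log(1/\varepsilon))$; since $\sup_{\varepsilon\in(0,1)}\varepsilon^{1/2}\log(1/\varepsilon)<\infty$, one obtains $\varepsilon^{3/4}\log(1/\varepsilon)=\varepsilon^{1/4}\,(\varepsilon^{1/2}\log(1/\varepsilon))\le K\varepsilon^{1/4}$, which is \eqref{TimeExistenceLemma:JConver}. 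I expect the delicate coupling between existence and confinement in the continuation step, and this exponent trade-off, to be where genuine care is required; the remaining estimates are routine applications of Lipschitz continuity on $\overline\Omega$.
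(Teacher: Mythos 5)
Your proof is correct and follows essentially the same route as the paper's: the same decomposition of the perturbed vector field, Young's inequality calibrated to produce exactly the constant $C$ of \eqref{TimeExistenceLemma:C}, the Gronwall bound $|y_i^{\varepsilon}-y_i|^{2}\leq \frac{\varepsilon^{2}}{C}(e^{Ct}-1)$, the confinement/continuation argument yielding \eqref{TimeExistenceLemma:Teps} (the paper phrases it via the maximal time at which $\sup_t |y_i^\varepsilon(t)|=l-\sigma/2$, which is equivalent to your open/closed argument), the observation $\tilde T_{\varepsilon}\leq T_{\varepsilon}$ from $\varepsilon^{1/2}\geq\varepsilon^{2}$, and the same pointwise $O(\varepsilon^{3/4})$ estimates for both integrands on $[0,\tilde T_{\varepsilon}]$. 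The only (harmless) deviation is the final balancing: the paper bounds $\tilde T_{\varepsilon}\leq \frac{\sigma^{2}}{4\varepsilon^{1/2}}$ via $\ln(1+x)\leq x$ so that $\varepsilon^{3/4}\cdot\varepsilon^{-1/2}$ lands exactly on $K\varepsilon^{1/4}$, whereas you use $\tilde T_{\varepsilon}=O(\log(1/\varepsilon))$ together with $\sup_{\varepsilon\in(0,1)}\varepsilon^{1/2}\log(1/\varepsilon)<\infty$, which even gives the marginally sharper rate $\varepsilon^{3/4}\log(1/\varepsilon)$.
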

\begin{proof}[Proof of \Cref{TimeExistenceLemma}]
Since $f$ is Lipschitz on bounded sets,  there exists a time $T_{\varepsilon}>0$ such that for each $i\in\{1,\ldots,I\}$ the closed loop problem \eqref{ClosedLoopProblem3} admits a solution $y_{i}^{\varepsilon}\in C([0,T_{\varepsilon}];\R^{d})$ where $T_{\varepsilon}\in (0,\infty]$ is defined as the largest time such that
\beq \sup_{\begin{array}{c}
     i\in \{1,\ldots,I\},  \\
     t\in [0,T_{\varepsilon}].
\end{array}}|y_i^{\varepsilon}(t)|= l-\frac{\sigma}{2}.\label{TEpsDef}\eeq
We next verify  that $T_{\varepsilon}$ satisfies \eqref{TimeExistenceLemma:Teps}
 where $C$ is given by \eqref{TimeExistenceLemma:C} and $\sigma\in (0,l)$ satisfies \eqref{TimeExistenceLemma:LInftyBound}. We assume that $T_{\varepsilon}$ is finite, otherwise this claim is trivially satisfied. Let $y_{i}$ be the solution of \eqref{ClosedLoopProblem3} for $v$. Then, subtracting  the equations \eqref{ClosedLoopProblem3} for $y^{\varepsilon}_{i}$ and $y_{i}$ we obtain
\begin{equation}
    (y^{\varepsilon}_{i}-y_{i})'=(f(y^{\varepsilon}_{i})-f(y_{i}))-\frac{1}{\beta}BB^{\top}\left(\nabla v_{\varepsilon}(y^{\varepsilon}_{i})-\nabla v(y_{i})\right) \mbox{ in }(0,T_{\varepsilon}),
    \label{TimeExistenceLemma:eq1}
\end{equation}
and $y_{i}^{\varepsilon}(0)-y_{i}(0)=0$. We have
\beq
\nabla v_{\varepsilon}(y^{\varepsilon}_{i})-\nabla v(y_{i})
=\left(\nabla v_{\varepsilon}(y^{\varepsilon}_{i})-\nabla v(y^{\varepsilon}_{i})\right)+\left(\nabla v(y^{\varepsilon}_{i})-\nabla v(y_{i})\right)
\label{TimeExistenceLemma:eq2}
\eeq
 in $(0,T_{\varepsilon})$. Multiplying \eqref{TimeExistenceLemma:eq1} by $y^{\varepsilon}_{i}-y_{i}$ and using \eqref{eq10.1}, \eqref{TimeExistenceLemma:C} and \eqref{TimeExistenceLemma:eq2} we get
\begin{equation}
    \frac{d}{dt}\left(\frac{1}{2}|y^{\varepsilon}_{i}-y_{i}|^{2}\right)\leq \frac{C}{2}|y^{\varepsilon}_{i}-y_{i}|^{2}+\frac{\varepsilon^{2}}{2}\mbox{ in }(0,T_{\varepsilon}).
    \label{TimeExistenceLemma:eq3}
\end{equation}
Multiplying both sides of \eqref{TimeExistenceLemma:eq3} by $e^{-Ct}$ and integrating between $0$ and $t$ we obtain for each $i\in\{1,\ldots,I\}$
\begin{equation} |y^{\varepsilon}_{i}-y_{i}|^{2}\leq \frac{\varepsilon^{2}}{C}\left(e^{Ct}-1\right)\mbox{ in }[0,T_{\varepsilon}],
 \label{TimeExistenceLemma:eq4}
\end{equation}
and \eqref{TimeExistenceLemma:conv} holds.  By \eqref{TimeExistenceLemma:eq4} and \eqref{TimeExistenceLemma:LInftyBound} we have
\beq  |y_{i}^{\varepsilon}|_{\infty}\leq |y_{i}^{\varepsilon}-y_i|_{\infty}+|y_{i}|_{\infty} \leq \frac{\varepsilon}{C^{1/2}}\left(e^{Ct}-1\right)^{1/2}+(l-\sigma) \mbox{ in }[0,T_{\varepsilon}].\eeq
Combining \eqref{TEpsDef} and the previous inequality
we obtain
$$ \frac{\sigma}{2}\leq  \frac{\varepsilon}{C^{1/2}}\left(e^{CT_{\varepsilon}}-1\right)^{1/2} $$
which clearly implies \eqref{TimeExistenceLemma:Teps} . \par
Now we turn to the proof of \eqref{TimeExistenceLemma:JConver}. Since $\varepsilon\in (0,1)$ and recalling the definition of $\tilde{T}_{\varepsilon}$ we have $$ \tilde{T_{\varepsilon}}=\frac{1}{C}\ln\left(1+\frac{C\sigma^{2}}{4\varepsilon^{1/2}}\right) <\frac{1}{C}\ln\left(1+\frac{C\sigma^{2}}{4\varepsilon^{2}}\right)\leq T_{\varepsilon}.$$
Therefore, we have $\tilde{T}_{\varepsilon}<T_{\varepsilon},$ and using that $\ln(x+1)\leq x$ for all $x\geq 0$ we also get $\tilde{T}_{\varepsilon}\leq \frac{\sigma^{2}}{4\varepsilon^{1/2}}.$
Since $\ell$ is $C^{1}$, and $\ell(0)=0$  we can estimate
\beq \left|\int_{0}^{\tilde{T}_{\varepsilon}}\ell(y_{i}^{\varepsilon})dt-\int_{0}^{\tilde{T}_{\varepsilon}}\ell(y_{i})dt\right|\leq
    |\ell|_{Lip(\overline{\Omega})}\int_{0}^{\tilde{T}_{\varepsilon}}|y_{i}^{\varepsilon}-y_{i}|dt.\eeq
Together with \eqref{TimeExistenceLemma:eq4} this implies that
\beq \left|\int_{0}^{\tilde{T}_{\varepsilon}}\ell(y_{i}^{\varepsilon})dt-\int_{0}^{\tilde{T}_{\varepsilon}}\ell(y_{i})dt\right|\leq
    \frac{\varepsilon}{C^{1/2}} \tilde{T}_{\varepsilon}\left(e^{C\tilde{T}_{\varepsilon}}-1\right)^{1/2}|\ell|_{Lip(\overline{\Omega})}. \eeq
By the definition of $\tilde T_{\varepsilon}$ and since $\tilde{T}_{\varepsilon}\leq \frac{\sigma^{2}}{4\varepsilon^{1/2}}$, we obtain
\beq \left|\int_{0}^{\tilde{T}_{\varepsilon}}\ell(y_{i}^{\varepsilon})dt-\int_{0}^{\tilde{T}_{\varepsilon}}\ell(y_{i})dt\right|\leq
    \varepsilon^{1/4}\frac{\sigma^{3}}{8} |\ell|_{Lip(\overline{\Omega})}.\label{ConvergenceTheo2Eq7:1} \eeq

To estimate the second summand in $\mathcal{J}_{\tilde{T}_{\varepsilon}}(v_{\varepsilon})-\mathcal{J}_{\tilde{T}_{\varepsilon}}(v)$ we first observe that due to \eqref{eq10.1}, \eqref{TimeExistenceLemma:LInftyBound}, and \eqref{TimeExistenceLemma:conv} there exists a constant $K$ independent of $\varepsilon\in (0,1)$ such that
for all $i \in \{1,\ldots,I\}$
$$\max_{t\in [0,\tilde{T}_{\varepsilon}]}\left|B^{\top}\left(\nabla v_{\varepsilon}(y_{i}^{\varepsilon}(t))+\nabla v(y_{i}(t))\right)\right|\leq K.$$ Consequently we find by
\eqref{eq10.1} and \eqref{TimeExistenceLemma:conv}
\beq \label{eq:11.17}
\begin{array}{l}
\dis \big| \int_{0}^{\tilde{T}_{\varepsilon}}|B^{\top}\nabla v_{\varepsilon}(y_i^{\varepsilon})|^{2}dt-\int_{0}^{\tilde{T}_{\varepsilon}}|B^{\top}\nabla v(y_i)|^{2}dt\, \big| \\[1.5ex]
\le\dis \int_{0}^{\tilde{T}_{\varepsilon}}|B^{\top}\nabla (v_{\varepsilon}(y_i^{\varepsilon}) + v(y_i))| \, |B^{\top}\nabla (v_{\varepsilon}(y_i^{\varepsilon}) - v(y_i))| \, dt
\\[1.5ex]
\le\dis K \int_{0}^{\tilde{T}_{\varepsilon}}|B^{\top}\nabla (v_{\varepsilon}(y_i^{\varepsilon}) - v(y_i^{\varepsilon}))| +  |B^{\top}\nabla (v(y_i^{\varepsilon}) - v(y_i))| \,dt
\\[1.5ex]
\le \dis K(\varepsilon \tilde T_{\varepsilon} +|B^\top \nabla v|_{Lip(\bar \Omega)} \int_{0}^{\tilde{T}_{\varepsilon}} |y_i^\varepsilon -y_i| \, dt)\\[1.5ex]
\le \dis K(\varepsilon \tilde T_{\varepsilon} +|B^\top \nabla v|_{Lip(\bar \Omega)} \tilde T_{\varepsilon} \frac{\varepsilon}{\sqrt {C}} \sqrt{e^{C\tilde T_\varepsilon} - 1}) \le K [ \frac{{\varepsilon}^{\frac{1}{2}} \sigma^2}{4} + |B^\top \nabla v|_{Lip(\bar \Omega)}\frac{\varepsilon^{\frac{1}{4}}  \sigma^3}{8} ].
\end{array}
\eeq
Inequality \eqref{TimeExistenceLemma:JConver} is obtained from \eqref{ConvergenceTheo2Eq7:1} and \eqref{eq:11.17}.
\end{proof}

\begin{lemma}
Consider $T\in (0,\infty]$ and a sequence $v_{k}\in C^{1,1}(\overline{\Omega})$ converging  in $C^{1,1}(\overline{\Omega})$ to $v$, such that $\mathcal{J}_{T}(v_{k})<\infty$ and $\mathcal{J}_{T}(v)<\infty$ .  Then we have
\begin{equation}
    \lim_{k\to\infty}\mathcal{J}_{T}(v_{k})=\mathcal{J}_{T}(v),
    \label{JTContinuity}
\end{equation}
for $T\in (0,\infty)$ and otherwise
\begin{equation}
    \mathcal{J}_{\infty}(v)\leq \liminf_{k\to\infty}\mathcal{J}_{\infty}(v_{k}).
    \label{JSemiContinuity}
\end{equation}
\label{ContinuityLemma}
\end{lemma}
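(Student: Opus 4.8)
The plan is to treat the finite-horizon case first by establishing continuous dependence of the closed-loop trajectories on $v$ in the $C^{1,1}$ topology, and then to obtain the lower semicontinuity for $T=\infty$ by a monotone-limit argument built on the finite-horizon result. First I would fix a finite $T$ and denote by $y_i$ and $y_i^{k}$ the solutions of the closed loop problem \eqref{ClosedLoopProblem3} associated with $v$ and $v_k$, respectively. Since $\mathcal{J}_{T}(v)<\infty$ and $\mathcal{J}_{T}(v_k)<\infty$, all these trajectories exist on $[0,T]$ and take values in the compact set $\overline{\Omega}$; hence $f$ is Lipschitz on $\overline{\Omega}$ with a single constant, and $\nabla v$ is Lipschitz because $v\in C^{1,1}(\overline{\Omega})$. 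Subtracting the two equations and splitting
\beq \nabla v_k(y_i^{k})-\nabla v(y_i)=\bigl(\nabla v_k(y_i^{k})-\nabla v(y_i^{k})\bigr)+\bigl(\nabla v(y_i^{k})-\nabla v(y_i)\bigr), \eeq
the first term is controlled by $\varepsilon_k:=|\nabla v_k-\nabla v|_{C(\overline{\Omega})}\to 0$ and the second by $|\nabla v|_{Lip(\overline{\Omega})}\,|y_i^{k}-y_i|$. A Gronwall estimate, essentially the one already carried out in \Cref{TimeExistenceLemma}, then yields
\beq \max_{t\in[0,T]}|y_i^{k}(t)-y_i(t)|\le C_T\,\varepsilon_k \quad\text{for all } i\in\{1,\dots,I\}, \eeq
with $C_T$ depending only on $T$, the Lipschitz data of $f$, on $B$, $\beta$ and $|\nabla v|_{Lip(\overline{\Omega})}$, so that $y_i^{k}\to y_i$ uniformly on $[0,T]$.

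From this uniform convergence the continuity \eqref{JTContinuity} follows directly. Indeed, $\ell\in C^{1}$ gives $\ell(y_i^{k})\to\ell(y_i)$ uniformly on $[0,T]$, and the same splitting as above shows $B^{\top}\nabla v_k(y_i^{k})\to B^{\top}\nabla v(y_i)$ uniformly; consequently the integrands defining $\mathcal{J}_{T}(v_k)$ converge uniformly to those of $\mathcal{J}_{T}(v)$ on the bounded interval $[0,T]$. Integrating over $[0,T]$ and summing over $i\in\{1,\dots,I\}$ then yields $\mathcal{J}_{T}(v_k)\to\mathcal{J}_{T}(v)$.

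For $T=\infty$ I would exploit the nonnegativity of the integrand. Since $\ell\ge 0$ and $|B^{\top}\nabla v|^{2}\ge 0$, the map $T\mapsto\mathcal{J}_{T}$ is nondecreasing, so $\mathcal{J}_{\infty}(v_k)<\infty$ and $\mathcal{J}_{\infty}(v)<\infty$ force $\mathcal{J}_{T}(v_k)<\infty$ and $\mathcal{J}_{T}(v)<\infty$ for every finite $T$ (in particular the trajectories exist globally), and moreover $\mathcal{J}_{T}(v_k)\le\mathcal{J}_{\infty}(v_k)$. Applying the finite-horizon result gives, for each fixed finite $T$,
\beq \mathcal{J}_{T}(v)=\lim_{k\to\infty}\mathcal{J}_{T}(v_k)\le\liminf_{k\to\infty}\mathcal{J}_{\infty}(v_k). \eeq
Letting $T\to\infty$ and using $\mathcal{J}_{\infty}(v)=\lim_{T\to\infty}\mathcal{J}_{T}(v)$ yields \eqref{JSemiContinuity}.

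The main obstacle, although ultimately routine, is the uniform convergence of the trajectories: one must guarantee that both $y_i$ and the approximants $y_i^{k}$ remain inside the common compact set $\overline{\Omega}$, so that the local Lipschitz constant of $f$ may be used as a single constant throughout the Gronwall argument and the resulting rate is uniform in $k$. This is exactly what the finiteness hypotheses provide through the state constraint $y_i(t)\in\overline{\Omega}$ in \eqref{ClosedLoopProblem3}. Once the trajectory convergence is secured, the passage to the integral functionals and the semicontinuity step are immediate.
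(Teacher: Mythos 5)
Your proof is correct, but for the finite-horizon step it takes a genuinely different route from the paper. Where you establish trajectory convergence by a quantitative Gronwall estimate---subtracting the two closed-loop equations, splitting $\nabla v_k(y_i^{k})-\nabla v(y_i)$ exactly as in the proof of \Cref{TimeExistenceLemma}, and deriving the explicit rate $\max_{t\in[0,T]}|y_i^{k}(t)-y_i(t)|\le C_T\,\varepsilon_k$ with $\varepsilon_k=|\nabla v_k-\nabla v|_{C(\overline{\Omega})}$---the paper argues by compactness: the state constraint and the Lipschitz bounds on $\overline{\Omega}$ show that $\{y_i^{k}\}$ is bounded in $H^{1}((0,T);\R^{d})$, a subsequence converges weakly in $H^{1}$ and strongly in $C([0,T];\R^{d})$, one passes to the limit in \eqref{ClosedLoopProblem3}, and the limit is identified with $y_i$ by uniqueness of solutions, which upgrades to $C^{1}([0,T])$ convergence of the full sequence. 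Your route buys an explicit convergence rate and avoids both subsequence extraction and the separate uniqueness-based identification step (your estimate with $v_k=v$ even proves that uniqueness); its only extra ingredient is the Lipschitz continuity of $\nabla v$, which the hypothesis $v\in C^{1,1}(\overline{\Omega})$ supplies, so nothing is lost, and you correctly noted that the finiteness hypotheses are what keep both trajectories in $\overline{\Omega}$ so that a single Lipschitz constant serves throughout. Your treatment of $T=\infty$---monotonicity of $T\mapsto\mathcal{J}_{T}(\cdot)$ from the nonnegative integrand, $\mathcal{J}_{\bar{T}}(v)=\lim_{k\to\infty}\mathcal{J}_{\bar{T}}(v_k)\le\liminf_{k\to\infty}\mathcal{J}_{\infty}(v_k)$ for each finite $\bar{T}$, then $\bar{T}\to\infty$---coincides with the paper's argument for \eqref{JSemiContinuity}.
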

\begin{proof}[Proof of \Cref{ContinuityLemma}]
Consider first $T\in (0,\infty)$, $y_{i}^{k}$ and $y_{i}$ the solutions of the closed loop problems \eqref{ClosedLoopProblem2} for $v_{k}$ and $v_{*}$ respectively. Recalling the definition of $\mathcal{J}_{T}$ in \eqref{eq:Jdef} and the assumption that $\mathcal{J}_{T}(v_k)<\infty$, we know that $|y_{i}^{k}(t)|\leq l$ for all $t\in [0,T]$, $i\in \{1,\ldots,I\}$, and $k\in\{1,2,\ldots\}$. By the Lipschitz continuity of $f$ on $\overline{\Omega}$ and \eqref{ClosedLoopProblem2}, we get that the set $\{y_{i}^{k}:\ i=1,\ldots,I;\ k=1,2,\ldots,\}$ is bounded in $H^{1}((0,T);\R^{d})$. Therefore, for every $i\in \{1,\ldots,I\}$ there exists a function $\bar{y}_{i}\in H^{1}((0,T);\R^{d})\cap L^{\infty}((0,T);\R^{d})$ such that, passing to a sub-sequence,
\beq y_{i}^{k}\weak \bar{y}_{i} \mbox{ in }H^{1}((0,T);\R^{d})\mbox{ and }y_{i}^{k}\to \bar{y}_{i} \mbox{ in }C([0,T];\R^{d}).\eeq
Further, as $v_{k}$  converges to $v$ in $C^{1,1}(\overline{\Omega})$ and $y_{i}^{k}$  converges to $\bar{y}$ in $C([0,T])$, we get
\begin{equation}
    \lim_{k\to\infty}\nabla v_{k}(y^{k}_{i})=\nabla v(\bar{y}_{i}) \mbox{ in }C([0,T]) \mbox{ for all }i\in \{1,\ldots,I\}
    \label{ContinuityLemma:eq1}
\end{equation}
and
\begin{equation}
    \lim_{k\to\infty}\left\{f(y_{i}^{k})-\frac{1}{\beta}BB^{\top}\nabla v_{k}(y^{k}_{i})\right\}=f(\bar{y}_{i})-\frac{1}{\beta}BB^{\top}\nabla v(\bar{y}_{i}) \mbox{ in }C([0,T]),
\end{equation}
 for all $i\in \{1,\ldots,I\}$. This implies that the functions $\{\bar{y}_{i}\}_{i=1}^{I}$ are solutions of \eqref{ClosedLoopProblem2} and by uniqueness of the solutions of this problem, we have
$$\bar{y}_{i}=y_{i},\ \mbox{ for all }i\in \{1,\ldots,I\}.$$
Hence we obtain
\beq
\lim_{k\to\infty} y_{i}^{k}= y_{i} \mbox{ in }C^{1}([0,T];\R^{d}) \label{ContinuityLemma:eq2} \mbox{ for all }i\in \{1,\ldots,I\}.
\eeq
\par
By the continuity of $\ell$, \eqref{ContinuityLemma:eq1} and \eqref{ContinuityLemma:eq2}, we get that \eqref{JTContinuity} is verified for $T\in (0,\infty)$.
For $T=\infty$, we find that \eqref{ContinuityLemma:eq2} holds for all $\bar T \in (0,\infty)$.
Since $\ell$ is bounded from below by 0, we have $$ \mathcal{J}_{\bar{T}}(v_{k})\leq \mathcal{J}_{\infty}(v_{k}), \mbox{ for every }\bar{T}\in (0,\infty).$$
Then, taking the limit inf when $k\to\infty$ on both sides of the previous inequality  we get
$$\mathcal{J}_{\bar{T}}(v)\leq \liminf_{k\to\infty} \mathcal{J}_{\infty}(v_{k}), \mbox{ for every }\bar{T}\in (0,\infty), $$
where we use that $\liminf = \lim $ on the left hand side.
Finally, taking $\bar{T}\to\infty$ we obtain \eqref{JSemiContinuity}.
\end{proof}
\begin{proof}[Proof of \Cref{ExistenceTheo}]
Assume that there exists a feasible solution of problem \eqref{PolyLearningProblem}. Since the objective function is bounded from below, there exists an infimizing sequence $\theta^{k}\in \R^{M}$. We denote the infimum of \eqref{PolyLearningProblem} by $\mathcal{J}^{*}_{T}$ and set $v_{k}=\sum_{i=1}^{M}\theta^{k}_i\phi_i$, where $\theta^{k}_i$ is the $i$-th component of $\theta^{k}$. Since $\theta^{k}$ is an infimizing sequence, the sequence $\{P_{\gamma,r}(\theta^{k})\}_{k\in\N}$ is bounded, that is
$$ P_{\gamma,r}(\theta^{k})=\gamma\left( \frac{(1-r)}{2} |\theta^{k}|_{2}^{2}+r|\theta^{k}|_{1}\right)\leq C,$$
for some $C>0$ independent of $k$. This implies that there exists $\theta^{*}\in \R^{M}$ such that, passing to a sub-sequence
$$ \theta^{k}\to \theta^{*}\mbox{ and } v_{k}\to v^{*}=\sum_{i=1}^{n}\theta_{i}^{*}\phi_{i}\mbox{ in }C^{1,1}(\overline{\Omega}).$$
Hence, by Lemma \ref {ContinuityLemma} we have that
$ \tilde{\mathcal{J}}_{T}(\theta^{*})+ P_{\gamma,r}(\theta^{*})\leq \mathcal{J}^{*}_T,$
and we conclude that $\theta^{*}$ is a solution of \eqref{PolyLearningProblem}.
\end{proof}

\begin{proof}[Proof of \Cref{ConvergenceTheo2}]
We provide the proof in several steps.\par
{\em Step 1.} Let $V$ be the value function of \eqref{ControlProblem}. Since $V$ is assumed to be  $C^{1,1}(\overline{\Omega})$, by Theorem 9 in \cite[Section 7.2]{Hayek}, for every $\varepsilon\in (0,1)$ there exists a natural number $k(\varepsilon)$ and a polynomial $V_{\varepsilon}\in \P_{k(\varepsilon)}$ such that


\beq  \norm{B^\top(\nabla V_{\varepsilon}-\nabla V)}_{C(\overline{\Omega})}< \varepsilon.\label{ConvergenceV}\eeq
Since $\nabla V(0)=0$ and $V(0)=0$, we assume that $V_{\varepsilon}(0)=0$ and $\nabla V_{\varepsilon}(0)=0$, otherwise we can redefine it by subtracting $V_{\varepsilon}(0)+\nabla V_{\varepsilon}(0)\cdot x$ from it. We denote the coefficients of $V_{\varepsilon}$ with respect to the basis $X_{k(\varepsilon)}=\B_{k(\varepsilon)}\setminus \B_1$ by $\theta^{\varepsilon}\in \R^{M_{k(\varepsilon)}}$. By \Cref{TimeExistenceLemma}, for  every $i\in \{1,\ldots,I\}$ problem \eqref{ClosedLoopProblem3} has a solution $y^{\varepsilon}_{i}\in C^{1}([0,\tilde T_{\varepsilon}],\overline{\Omega})$, with $T=\tilde{T}_{\varepsilon}$ and $v=V_{\varepsilon}$, where $\tilde{T}_{\varepsilon}$ is given by \eqref{TimeExistenceLemma:Teps2}. Moreover, by \eqref{TimeExistenceLemma:JConver} we know that there exists $K>0$ independent of $\varepsilon$, such that
\beq
\mathcal{J}_{\tilde{T}_\varepsilon}(V_{\varepsilon})\leq \mathcal{J}_{\tilde{T}_\varepsilon}(V)+K\varepsilon^{1/4}.
\label{ConvergenceTheo2:proof:JConver}
\eeq


{\em Step 2.}
 We consider problem \eqref{PolyLearningProblem} with $X=\B_{k(\varepsilon)} \setminus \B_1$, $T=\tilde{T}_{\varepsilon}$,
 $\gamma>0$ and $r\in [0,1]$,  and denote its  solution   by $\theta^{\gamma,r,k(\varepsilon),\tilde{T}_{\varepsilon}}$, which we know to exist by Lemma \ref{TimeExistenceLemma} and Theorem \ref{ExistenceTheo}.
 We point out that it is possible to use $X=\S_{\tilde{k}(\varepsilon)}\setminus \B_1$ instead of $B_{k(\varepsilon)}$, provided that $\tilde{k}(\varepsilon)$ is sufficiently large  such that $\B_{k(\varepsilon)}\subset\S_{\tilde{k}(\varepsilon)}$. We set $$v_{\gamma,r,k(\varepsilon),\tilde{T}_{\varepsilon}}=\sum_{i=1}^{M_{k(\varepsilon)}}\theta^{\gamma,r,k(\varepsilon),\tilde{T}_{\varepsilon}}_i \phi_{i}.$$
Since $\theta^{\gamma,r,k(\varepsilon),\tilde{T}_{\varepsilon}}$ is optimal, we have
\begin{equation}
    \mathcal{J}_{\tilde{T}_{\varepsilon}}(v_{\gamma,k(\varepsilon),\tilde{T}_{\varepsilon}})+P_{\gamma,r}(\theta^{\gamma,r,k(\varepsilon),\tilde{T}_{\varepsilon}})\leq \mathcal{J}_{\tilde{T}_{\varepsilon}}(V_{\varepsilon})+P_{\gamma,r}(\theta^{\varepsilon})
    \label{ConvergenceTheo2Eq52}
\end{equation}
and by \eqref{ConvergenceTheo2:proof:JConver}
we obtain
\begin{equation}
    \mathcal{J}_{\tilde{T}_\varepsilon}(v_{\gamma,k(\varepsilon),\tilde{T}_{\varepsilon}})+P_{\gamma,r}(\theta^{\gamma,r,k(\varepsilon),\tilde{T}_{\varepsilon}})\leq \mathcal{J}_{\infty}(V)+K \varepsilon^{1/4}+P_{\gamma,r}(\theta^{\varepsilon}).
    \label{ConvergenceTheo2Eq8}
\end{equation}
We now choose $\gamma=\gamma_{\varepsilon}$ such that $  P_{\gamma_{\varepsilon},r}(\theta^{\varepsilon})= K\varepsilon^{1/4}.$ Then, we obtain
  \beq \mathcal{J}_{\tilde{T}_{\varepsilon}}(v_{\gamma_{\varepsilon},r,n(\varepsilon),\tilde{T}_\varepsilon})+P_{\gamma_{\varepsilon},r}(\theta^{\gamma_{\varepsilon},r,k(\varepsilon),T_{\varepsilon}})\leq \mathcal{J}_{\infty}(V)+2K\varepsilon^{1/4} \label{BoundJK}\eeq
  and taking $\varepsilon\to 0$, we get for every $r\in [0,1]$
 \beq \limsup_{\varepsilon\to 0} \mathcal{J}_{\tilde{T}_{\varepsilon}}(v_{\gamma
 _\varepsilon,r,n(\varepsilon),\tilde{T}_\varepsilon})\leq \mathcal{J}_{\infty}(V). \label{ConvergenceTheo2Eq9}\eeq
 \par
{\em Step 3.}  For $i\in\{1,\ldots,I\}$, we denote the solutions of \eqref{ClosedLoopProblem3}, for $y_0=y^{i}_0$, $T=\tilde{T}_{\varepsilon}$, and $v=v^{\gamma_{\varepsilon},r,k(\varepsilon),\tilde{T}_{\varepsilon}}$, by $y_i^{\gamma_{\varepsilon},r,k(\varepsilon),\tilde{T}_{\varepsilon}}$, and we define the controls $u_{i}^{\gamma_{\varepsilon},r,k(\varepsilon),\tilde{T}_{\varepsilon}}\in L^{2}((0,\tilde{T}_\varepsilon);\R^{m})$ by
$$u_{i}^{\gamma_{\varepsilon},r,k(\varepsilon),\tilde{T}_{\varepsilon}}(t)=-\frac{1}{\beta}B^{\top}\nabla v^{\gamma_{\varepsilon},r,k(\varepsilon),\tilde{T}_{\varepsilon}}(y_{i}^{\gamma_{\varepsilon},r,k(\varepsilon),\tilde{T}_{\varepsilon}}(t))\mbox{ in }(0,\tilde{T}_\varepsilon).$$
Now, by the definition of the controls we have for $i\in \{1,\ldots,I\}$ and all $\bar{T}\in (0,\tilde{T}_{\varepsilon})$ $$ \int_{0}^{\bar{T}}|u_i^{\gamma_{\varepsilon},r,k(\varepsilon),\tilde{T}_{\varepsilon}}|^2 dt=\int_{0}^{\bar{T}} \left|B^{\top}\nabla v^{\gamma_{\varepsilon},r,k(\varepsilon),\tilde{T}_{\varepsilon}}(y_i^{\gamma_{\varepsilon},r,k(\varepsilon),\tilde{T}_{\varepsilon}})\right|^{2}dt\leq 2\beta I|B|^{2}\tilde{\mathcal{J}}_{\tilde{T}_{\varepsilon}}(v^{\gamma_{\varepsilon},r,k(\varepsilon),\tilde{T}_{\varepsilon}}).$$
Using \eqref{BoundJK} and $\varepsilon\le 1$ in the previous inequality we get
\beq \int_{0}^{\bar{T}}|u_i^{\gamma_{\varepsilon},r,k(\varepsilon),\tilde{T}_{\varepsilon}}|^2 dt\leq 2\beta I|B|^{2}(2K+\mathcal{J}_{\infty}(V)).\label{ControlBound}\eeq
In virtue of \eqref{ControlBound}, \eqref{ClosedLoopProblem3},  and the fact that $|y_i^{\gamma_{\varepsilon},r,k(\varepsilon),\tilde{T}_{\varepsilon}}(t)|\leq l$ for all $t\in [0,\tilde{T}_{\varepsilon}]$, we get
\beq \int_{0}^{\bar{T}}\left|\frac{d}{dt}y_i^{\gamma_{\varepsilon},r,k(\varepsilon),\tilde{T}_{\varepsilon}}(t)\right|^{2}dt\leq \bar{T}\sup_{x\in\overline{\Om}}|f(x)|^{2} +2\beta I|B|^{2}(2K+\mathcal{J}_{\infty}(V)).\eeq
Thus, for every $\bar{T}\in (0,\infty)$, $i=1,\ldots,I$ and taking $\varepsilon\to 0$, there exist $y_{i}^{*}\in H^{1}_{loc}((0,\infty);\R^{d})$ and $u_{i}^{*}\in L^{2}_{loc}((0,\infty);\R^{m})$ such that, passing to a sub-sequence
\beq
y_{i}^{\gamma_{\varepsilon},r,k(\varepsilon),\tilde{T}_{\varepsilon}}\weak y_{i}^{*} \mbox{ in }H^{1}((0,\bar{T});\R^{d})\mbox{ and }u_i^{\gamma_{\varepsilon},r,k(\varepsilon),\tilde{T}_{\varepsilon}}\weak u_i^{*}\mbox{ in }L^2((0,\bar{T});\R^{m}).\label{LimYU}
\eeq
Further, by the compact inclusion of $C([0,\bar{T}];\R^{d})$ into $H^{1}((0,\bar{T}),\R^{d})$, we have
\beq y_{i}^{\gamma_{\varepsilon},r,k(\varepsilon),\tilde{T}_{\varepsilon}}\to y_{i}^{*} \mbox{ in }C([0,\bar{T}];\R^{d})\label{LimYC}\eeq
when $\varepsilon\to 0$, for every $\bar{T}\in (0,\infty)$.\par
For $i\in \{1,\ldots,d\}$, we use \eqref{LimYU}, \eqref{LimYC} and take $\varepsilon\to0$ in \eqref{ClosedLoopProblem3} to obtain
\beq  (y^{*}_{i})'(t)= f(y_{i}^{*}(t))+Bu_{i}^{*}(t),\ \forall \ t \in (0,\infty),\quad  y_i^{*}(0)=y_{0}^{i}.\label{ConvergenceTheo2Eq10}\eeq
Additionally, using the definitions of $y_{i}^{\gamma_{\varepsilon},r,k(\varepsilon),T_{\varepsilon}}$ and $u_{i}^{\gamma_{\varepsilon},r,k(\varepsilon),T_{\varepsilon}}$ for $i\in \{1,\ldots,I\}$ together with \eqref{LimYU}, \eqref{LimYC}, and  the lower semi-continuity of $|\cdot|^{2}$ we have
\beq \frac{1}{I}\sum_{i=1}^{I}\int_{0}^{\bar{T}}\ell(y^{*}_{i})dt+\frac{\beta}{2}\int_{0}^{\bar{T}}|u_{i}^{*}|^{2}dt\leq \liminf_{\varepsilon\to 0} \mathcal{J}_{\tilde{T}_{\varepsilon}}(v_{\gamma
 _\varepsilon,r,n(\varepsilon),\tilde{T}_\varepsilon})\ \forall \bar{T} \in (0,\infty).\label{BoundControlProblem}\eeq
In particular, since $\bar{T}\in (0,\infty)$ in \eqref{BoundControlProblem} is arbitrary, we get
\beq \frac{1}{I}\sum_{i=1}^{I}\int_{0}^{\infty}\ell(y^{*}_{i})dt+\frac{\beta}{2}\int_{0}^{\infty}|u_{i}^{*}|^{2}dt\leq \liminf_{\varepsilon\to 0} \mathcal{J}_{\tilde{T}_{\varepsilon}}(v_{\gamma
 _\varepsilon,r,n(\varepsilon),\tilde{T}_\varepsilon}).\label{ConvergenceTheo2Eq11}\eeq
By \eqref{ConvergenceTheo2Eq10}, \eqref{ConvergenceTheo2Eq11} and the definition of the value function we have
\beq\begin{array}{l}
\dis\mathcal{J}_{\infty}(V)=\frac{1}{I}\sum_{i=1}^{I}\int_{0}^{\infty}\ell(y_{i})dt+\frac{\beta}{2}\int_{0}^{\infty}|B^{\top}\nabla V(y_{i})|^{2}dt\\
 \ecart\dis\leq \frac{1}{I}\sum_{i=1}^{I}\int_{0}^{\infty}\ell(y^{*}_{i})dt+\frac{\beta}{2}\int_{0}^{\infty}|u_{i}^{*}|^{2}dt\leq \liminf_{\varepsilon\to 0} \mathcal{J}_{\tilde{T}_{\varepsilon}}(v_{\gamma
 _\varepsilon,r,n(\varepsilon),\tilde{T}_\varepsilon}).
\end{array} \label{ConvergenceTheo2Eq12}\eeq
Finally, \eqref{ConvergenceTheo2Eq9} and \eqref{ConvergenceTheo2Eq12} imply
 \beq \lim_{\varepsilon\to 0} \mathcal{J}_{\tilde{T}_{\varepsilon}}(v_{\gamma
 _\varepsilon,r,n(\varepsilon),\tilde{T}_\varepsilon})= \mathcal{J}_{\infty}(V). \label{ConvergenceTheo2Eq13}\eeq
 which concludes the proof.
\end{proof}
\begin{prop}
\label{ExistenceProp} Assume that  $\nu\in C^{2}(\overline{\Omega})$ with $\nabla \nu(0)=0$ and $\sigma\in(0,l) $ are such that  \eqref{ClosedLoopProblem3} with $v=\nu$,  $T= \infty$  and $i\in \{1,\dots, I\}$ admits a solution $y_i$, satisfying
\beq \lim_{t\to \infty}y_i(t)=0, \ |y_i(t)|_{\infty}\leq l-\sigma,\ \forall t\in [0,\infty), \mbox{ for all }i\in\{1,\ldots,I\}. \label{ExistencePropStability}\eeq
Suppose further that the linearized system
\beq  z'=\left(Df(0)-\frac{1}{\beta}BB^{\top}\nabla \nu^{2}(0)\right)z,\quad z(0)=z_0 \label{ExistenceProp:LinearizedSys} \eeq
is exponentially stable, i.e. there exist $C>0$ and $\mu>0$ such that
$$|z|\leq Ce^{-\mu t}|z_0|\mbox{ for all } t \in (0,\infty)\mbox{ and }z_0\in \R^{d}.$$
Then, there exist $\varepsilon_0 \in (0,1)$, $\rho>0$, $K>0$, and $\kappa>0$, such that for every $\tilde{\nu}\in C^{2}(\overline{\Omega})$ which satisfies
\beq \label{eq:11.41}
 \norm{\nu-\tilde{\nu}}_{C^{2}(\overline{\Omega})}\leq \varepsilon_0\mbox{ and }\nabla\tilde{\nu}(0)=0,
\eeq
we have that the closed loop system \eqref{ClosedLoopProblem3} with $v=\tilde{\nu}$ is exponentially stable for every $y_0\in B(0,\rho)$, and for every $i\in\{1,\ldots,I\}$
\beq  |\tilde{y}_{i}|\leq K e^{-\kappa t}|y_0^i|\mbox{ for all }t \in (0,\infty),\label{ExistenceProp:ExpStabVesp}\eeq
and $\mathcal{J}_{\infty}(\tilde{\nu})<\infty$, where $\{\tilde{y}_i\}$ are the solution of \eqref{ClosedLoopProblem3} with $v=\tilde{\nu}$.
\end{prop}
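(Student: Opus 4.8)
The plan is to treat the closed-loop vector field $g_{\tilde\nu}(y):=f(y)-\frac{1}{\beta}BB^\top\nabla\tilde\nu(y)$ as a small perturbation of $g_\nu$ and to reduce everything to the stability of the linearization at the origin. Since $f(0)=0$ and $\nabla\tilde\nu(0)=0$, the origin is an equilibrium of \eqref{ClosedLoopProblem3} with $v=\tilde\nu$, and its Jacobian there is $\tilde A:=Df(0)-\frac{1}{\beta}BB^\top\nabla^2\tilde\nu(0)$, which differs from $A_\nu:=Df(0)-\frac{1}{\beta}BB^\top\nabla^2\nu(0)$ only through the Hessians at $0$; hence $\|\tilde A-A_\nu\|\le \frac{\|B\|^2}{\beta}\,\varepsilon_0$ whenever \eqref{eq:11.41} holds. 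The argument proceeds in four steps: (i) robustness of the linear stability, (ii) uniform local exponential stability of the nonlinear system, (iii) entry of the trajectories $\tilde y_i$ into the basin of attraction, and (iv) the global-in-time exponential bound together with finiteness of the cost.

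For step (i) I would fix the symmetric positive definite $P$ solving $A_\nu^\top P+PA_\nu=-I$, which exists because \eqref{ExistenceProp:LinearizedSys} is exponentially stable, i.e. $A_\nu$ is Hurwitz. Writing $\tilde A^\top P+P\tilde A=-I+E$ with $\|E\|\le 2\|P\|\,\|\tilde A-A_\nu\|$, one chooses $\varepsilon_0$ small enough that $\|E\|\le \tfrac12$, so that $\tilde A^\top P+P\tilde A\le-\tfrac12 I$ for every admissible $\tilde\nu$; thus $W(y)=y^\top Py$ is a common quadratic Lyapunov function for all the linearizations. For step (ii) I would expand $g_{\tilde\nu}(y)=\tilde Ay+R_{\tilde\nu}(y)$ and estimate, along trajectories, $\dot W=y^\top(\tilde A^\top P+P\tilde A)y+2y^\top PR_{\tilde\nu}(y)\le -\tfrac12|y|^2+2\|P\|\,|y|\,|R_{\tilde\nu}(y)|$. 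The remainder splits as $R_{\tilde\nu}(y)=[f(y)-Df(0)y]-\frac{1}{\beta}BB^\top[\nabla\tilde\nu(y)-\nabla^2\tilde\nu(0)y]$; the first bracket is $o(|y|)$ uniformly in $\tilde\nu$ by differentiability of $f$ at $0$, while for the second I would use $\nabla\tilde\nu(y)-\nabla^2\tilde\nu(0)y=\int_0^1[\nabla^2\tilde\nu(sy)-\nabla^2\tilde\nu(0)]y\,ds$ and bound $|\nabla^2\tilde\nu(sy)-\nabla^2\tilde\nu(0)|\le 2\varepsilon_0+\omega(s|y|)$, where $\omega$ is the modulus of continuity of the fixed Hessian $\nabla^2\nu$ on the compact set $\overline\Omega$. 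Shrinking $\varepsilon_0$ to absorb the $2\varepsilon_0$ term and then $\rho$ to absorb the $\omega(|y|)$ term yields $2\|P\|\,|R_{\tilde\nu}(y)|\le\tfrac14|y|$ for $|y|\le\rho$, whence $\dot W\le-\tfrac14|y|^2\le-\tfrac{1}{4\|P\|}W$. A standard sublevel-set argument then shows that a quadratic sublevel set contained in $B(0,\rho)$ is forward invariant and that $|\tilde y(t)|\le K_0e^{-\kappa t}|y_0|$ for $y_0\in B(0,\rho)$, with $K_0,\kappa$ depending only on $P$.

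Steps (iii)--(iv) use the hypothesis \eqref{ExistencePropStability}. Since $y_i(t)\to0$, for each $i$ there is a fixed, $\tilde\nu$-independent time $T_i$ with $y_i(T_i)\in B(0,\rho/2)$. As $g_{\tilde\nu}\to g_\nu$ in $C^1(\overline\Omega)$ when $\varepsilon_0\to0$ and the $y_i$ stay in the $(l-\sigma)$-ball strictly inside $\Omega$, a Gronwall estimate in the spirit of \Cref{TimeExistenceLemma} shows that, for $\varepsilon_0$ small, $\tilde y_i$ exists on $[0,T_i]$, stays in $\overline\Omega$, and satisfies $|\tilde y_i(T_i)-y_i(T_i)|<\rho/2$, so $\tilde y_i(T_i)\in B(0,\rho)$. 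Applying step (ii) from $T_i$ onward gives exponential decay for $t\ge T_i$, and on the compact interval $[0,T_i]$ the trajectory is bounded by $l$; enlarging $K_0$ to a uniform $K\ge l\,e^{\kappa\max_iT_i}/\min_i|y_0^i|$, taken over the finitely many initial data and uniformly in $\tilde\nu$, yields \eqref{ExistenceProp:ExpStabVesp} for all $t\ge0$. Finiteness of the cost then follows from this bound: using $\ell(0)=0$ with $\ell$ Lipschitz on $\overline\Omega$ gives $\ell(\tilde y_i(t))\le|\ell|_{Lip(\overline\Omega)}Ke^{-\kappa t}|y_0^i|$, and $\nabla\tilde\nu(0)=0$ with $\nabla\tilde\nu$ Lipschitz gives $|B^\top\nabla\tilde\nu(\tilde y_i(t))|^2\le C e^{-2\kappa t}$, both integrable on $(0,\infty)$, so $\mathcal{J}_\infty(\tilde\nu)<\infty$.

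The main obstacle is step (ii): obtaining constants $\rho,K,\kappa$ that are uniform over the whole $C^2$-neighborhood of $\nu$. The delicate point is that $\|\cdot\|_{C^2(\overline\Omega)}$ controls the sup-norm of the Hessian but not its modulus of continuity, so $R_{\tilde\nu}$ cannot be bounded directly; the remedy is to transfer the modulus of continuity from the variable function $\tilde\nu$ to the fixed function $\nu$ at the cost of an additive $2\varepsilon_0$, which is then absorbed by shrinking $\varepsilon_0$, while the genuinely $|y|$-small part is controlled by shrinking $\rho$.
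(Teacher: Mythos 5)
Your proposal is correct and follows essentially the same route as the paper's proof: a quadratic Lyapunov function $y^{\top}My$ obtained from the Lyapunov equation for the nominal linearization, a smallness condition on $\varepsilon_0$ making it a uniform Lyapunov function for all perturbed closed loops on a ball $B(0,\rho)$, the Gronwall estimate of \Cref{TimeExistenceLemma} to steer each $\tilde{y}_i$ into that ball at a fixed finite time $T$, and then exponential decay together with $\ell(0)=0$ and $\nabla\tilde{\nu}(0)=0$ to conclude $\mathcal{J}_{\infty}(\tilde{\nu})<\infty$. The only difference is bookkeeping: where you linearize $g_{\tilde{\nu}}$ and transfer the Hessian's modulus of continuity from $\tilde{\nu}$ to $\nu$ at cost $2\varepsilon_0$, the paper absorbs the perturbation directly at the nonlinear level via the mean-value bound $|\nabla\tilde{\nu}(y)-\nabla\nu(y)|\leq\varepsilon|y|$ (valid since $\nabla(\tilde{\nu}-\nu)(0)=0$), which sidesteps your ``delicate point'' entirely.
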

\begin{proof}[Proof of \Cref{ExistenceProp}]
 Consider $\varepsilon\in (0,1)$ and a function $\nu_{\varepsilon}\in C^{2}(\overline{\Omega})$ such that
\beq  \norm{\tilde{\nu}-\nu}_{C^{2}(\overline{\Omega})}\leq \varepsilon \mbox{ and }\nabla \tilde{\nu}(0)=0.\label{ConvergenceV2}\eeq
By \Cref{TimeExistenceLemma} there exists a time $T_{\varepsilon}>0$, such that for each $i\in \{1,\ldots,I\}$, problem \eqref{ClosedLoopProblem3} with $v=\nu^{\varepsilon}$ and $T=T_{\varepsilon}$ admits a solution $y_i^{\varepsilon}\in C^{1}([0,T_{\varepsilon}];\overline{\Omega})$, which satisfies \eqref{TimeExistenceLemma:conv}. Moreover,  we know that $T_{\varepsilon}$ fulfills \eqref{TimeExistenceLemma:Teps}.\par
Due to the exponential stability of \eqref{ExistenceProp:LinearizedSys}, there exists a symmetric and positive definite matrix $M\in \R^{d\times d}$ (see Theorem 4.6 in \cite[p.~136]
{Khalil}), such that
$$ y^{\top}(A^{\top}M+MA)y=-|y|^{2}, \mbox{ for all }y\in \R^{d},$$
where $A=Df(0)-\frac{1}{\beta}BB^{\top}\nabla^{2}\nu(0)$. This equality and the fact that $\nu$ is $C^{2}(\overline{\Omega})$, implies that there exists $\rho >0$, such that
\beq 2(f(y)- \frac{1}{\beta}BB^{\top}\nabla \nu(y))^{\top}My<-\frac{3}{4} |y|^{2}\mbox{ for all }y\in B(0,\rho)\subset\Omega. \label{LyapunovIneq}\eeq
By \eqref{ConvergenceV2} and the integral mean value theorem we have
\beq|\nabla \tilde{\nu}(y)-\nabla \nu(y)|\leq \varepsilon|y|\mbox{ for all }y\in\overline{\Omega}. \label{ConvergenceV2Lip}\eeq
For $\varepsilon<\frac{\beta}{8|B|^{2}|M|}$ and combining \eqref{ConvergenceV2}, \eqref{ConvergenceV2Lip}, and \eqref{LyapunovIneq} we have
\beq 2(f(y)- \frac{1}{\beta}BB^{\top}\nabla \tilde{\nu}(y))^{\top}My<-\frac{1}{2} |y|^{2}\mbox{ for all }y\in B(0,\rho).\label{ExistenceProp:StabVeps}\eeq
Thus, $\psi(y)=y^{\top}My$ is a Lyapunov function for \eqref{ClosedLoopProblem3} with $v=\nu^{\varepsilon}$ in $B(0,\rho)$, \cite[Section 4.4, Theorem 4.10]{Khalil}.

By \eqref{ExistencePropStability}, we know that there exists $T>0$ such that
$$ |y_{i}(t)|<\frac{\rho}{4}\mbox{ for all }t\geq T\mbox{ and }i\in \{1,\ldots,I\}.$$
Further, by \eqref{TimeExistenceLemma:Teps}, we know that $T_{\varepsilon}>T$ if  $\varepsilon<C^{1/2}\sigma/(e^{CT}-1)^{1/2}$. Hence, by \eqref{TimeExistenceLemma:conv} and choosing $\varepsilon$ satisfying $\varepsilon<C^{1/2}\sigma/(e^{CT}-1)^{1/2}$ we have
\beq|y^{\varepsilon}_{i}(T)|\leq |y^{\varepsilon}_{i}(T)-y_i(T)|+ |y_i(T)|\leq\frac{\varepsilon}{C^{1/2}}(e^{CT}-1)^{1/2}+|y_i(T)|.\label{ExistenceProp:proof:yTBound}\eeq
Choosing
$$
\varepsilon\leq \varepsilon_0:=\min\left\{\frac{\rho C^{1/2}}{4(e^{CT}-1)^{1/2}} ,\frac{C^{1/2}\sigma}{(e^{CT}-1)^{1/2}},\frac{\beta}{8|B|^{2}|M|}\right\},
$$
we have that
$\frac{\varepsilon}{C^{1/2}}(e^{CT}-1)^{1/2} \leq \frac{\rho}{4}$ and by  \eqref{ExistenceProp:proof:yTBound} we obtain
$$ |y^{\varepsilon}_{i}(T)| \leq\frac{\rho}{2}  \mbox{ for all }i\in \{1,\ldots,I\}.$$
Given that $\psi$ is a Lyapunov function in $B(0,\rho)$, we have that there exist $K>0$ and $\kappa>0$ such that \eqref{ExistenceProp:ExpStabVesp} holds for all $\tilde{\nu}$ satisfying \eqref{eq:11.41}.
 Further, using that $\tilde \nu$ is $C^{1,1}(\overline{\Omega})$, $\ell$ is $C^{1}$, and \eqref{ExistenceProp:ExpStabVesp}, we get that $\mathcal{J}_{\infty}(\tilde\nu)<\infty$. Therefore, \Cref{ExistenceProp} holds with $\varepsilon_0$.
\end{proof}

\vskip 0.2in
\bibliography{biblio}

\end{document}